\def\R{{\mathbb{R}}}
\def\Z{{\mathbb{Z}}}
\newtheorem{theorem}{Theorem}[section]
\newtheorem{lemma}[theorem]{Lemma}
\newtheorem{proposition}[theorem]{Proposition}
\theoremstyle{definition}
\newtheorem{remark}[theorem]{Remark}
\numberwithin{equation}{section}
\begin{document}

\title[Visibility in Brownian interlacements, Poisson cylinders and Boolean models]{Visibility in Brownian interlacements, Poisson cylinders and Boolean models}
\author{Yingxin Mu}
\address{
  Yingxin Mu,
  University of Leipzig, Institute of Mathematics,
  Augustusplatz 10, 04109 Leipzig, Germany.
}
\email{yingxin.mu@uni-leipzig.de}

\author{Artem Sapozhnikov}
\address{
  Artem Sapozhnikov,
  University of Leipzig, Institute of Mathematics,
  Augustusplatz 10, 04109 Leipzig, Germany.
}
\email{artem.sapozhnikov@math.uni-leipzig.de}

\begin{abstract}
We study visibility inside the vacant set of three models in $\R^d$ with slow decay of spatial correlations: Brownian interlacements, Poisson cylinders and Boolean model. For each of them, we obtain sharp asymptotic bounds on the probability of visibility to distance $r$ in some direction in terms of the probability of visibility to distance $r$ in a given direction. In dimensions $d\geq 4$, the ratio of the two probabilities has the same scaling $r^{2(d-1)}$ for all three models, but in lower dimensions the scalings are different.
In particular, we improve some main results from \cite{Calka-visibility,ET-visibility}.
\end{abstract}

 
\maketitle

\section{Introduction}
The problem of visibility in random fields of obscuring elements is of interest in various applications, see \cite{Zacks-visibility}. The first mathematical study of visibility goes back to Pólya in 1918 \cite{Polya-visibility}. He showed that in a forest of trees of constant radius $R$ planted in the vertices of the lattice $\Z^2$, an unobstructed visibility to distance $r$ from the origin is possible if $R=O(\tfrac1r)$ when $r\to\infty$. 
In \cite{Calka-visibility}, Calka et al.\ considered visibility through the vacant set of Boolean model in $\R^d$ with fixed radius of balls; they obtained an explicit formula for the probability of visibility in dimension $2$ and gave asymptotic bounds in dimensions $d\geq 3$. Recently, Elias and Tykesson \cite{ET-visibility} studied visibility through the vacant set of Brownian interlacements in $\R^d$ ($d\geq 3$) resp.\ of Brownian excursion set in the unit disc. For the former, they obtained bounds on the probability of visibility to distance $r$ in terms of the probability of visibility to distance $r$ in a given direction. 
Visibility in the hyperbolic plane was studied in \cite{Kahane-1,Kahane-2,Kahane-3,BJST-visibility-H,TC-visibility-H}.

\smallskip

In this article, we follow up \cite{Calka-visibility} and \cite{ET-visibility} and study visibility through the vacant set of three models in $\R^d$ with slow (algebraic) decay of spatial correlations: 
\emph{Brownian interlacements}, \emph{Poisson cylinders} and \emph{Boolean model}. For all three models, we obtain sharp asymptotic bounds on the probability of visibility to distance $r$ in terms of the probability of visibility to distance $r$ in a given direction; in particular, we improve some main results from \cite{Calka-visibility,ET-visibility}.

\smallskip

In a general setup, let $\mathcal C$ be a random closed subset of $\R^d$ with distribution $\mathsf P$, which is rotationally invariant.\footnote{$\mathsf P$ is a probability measure on measurable space $(\Sigma,\mathscr F)$, where $\Sigma$ is the set of closed sets in $\R^d$ and $\mathscr F$ is the sigma-algebra on $\Sigma$ generated by sets $\{F\in\Sigma\,:\,F\cap K=\emptyset\}$ for compacts $K$.} We are interested in the visibility through the vacant set of $\mathcal C$, defined as $\mathcal V=\R^d\setminus \mathcal C$. We write $x\stackrel{L}\longleftrightarrow y$ if the line segment $[x,y]$ is contained in $\mathcal V$ and say that $y$ is \emph{visible} from $x$. We also write $x\stackrel{L}\longleftrightarrow A$ for $x\in\R^d $ and $A\subseteq\R^d$, if there exist $y\in A$ such that $x\stackrel{L}\longleftrightarrow y$ and say that $A$ is visible from $x$. For $r>0$, let 
\begin{equation}\label{def:fPvis}
f(r) = \mathsf P[0\stackrel{L}\longleftrightarrow re_1]\quad\text{and}\quad 
P_{\mathrm{vis}}(r) = \mathsf P[0\stackrel{L}\longleftrightarrow \partial B(0,r)]
\end{equation}
be the probability of visibility to distance $r$ in a given direction resp.\ to distance $r$ in some direction. 
Our main result gives bounds on $P_{\mathrm{vis}}(r)$ in terms of $f(r)$,
\begin{equation}\label{intro:visibility-bounds}
c_1\big(\tfrac{1}{\delta(r)}\big)^{d-1}r^{d-1}f(r)\leq P_{\mathrm{vis}}(r)\leq 
c_2\big(\tfrac{1}{\delta(r)}\big)^{d-1}r^{d-1}f(r),
\end{equation}
where $c_i=c_i(d,\mathsf P)$, in each of the three models, which we now introduce informally. 

\smallskip

 \emph{Brownian interlacements} is the range of a Poisson cloud of doubly infinite Wiener sausages of fixed radius $\rho$ in $\R^d$ ($d\geq 3$), whose density is governed by a parameter $\alpha>0$. It was introduced by Sznitman in \cite{Sznitman-BI} as the continuous analogue of random interlacements (see \cite{Sznitman-AM}). For any positive $\alpha$ and $\rho$, it is an almost surely connected random subset of $\R^d$, see \cite{Li-BI}. Its vacant set, however, undergoes a non-trivial percolation phase transition, see \cite{Li-BI,MS-BI}.
 \emph{Poisson cylinders} is the range of a Poisson cloud of doubly infinite cylinders of fixed radius $\rho$ in $\R^d$ ($d\geq 2$), whose density is governed by a parameter $\alpha>0$. It was introduced by Tykesson and Windisch in \cite{TW-cylinders} on suggestion of Benjamini.  As in the case of Brownian interlacements, the occupied and the vacant set of Poisson cylinders have quite different properties. The occupied set is almost surely connected for all positive $\alpha$ and $\rho$, see \cite{BT-PC-connected}. The vacant set undergoes a non-trivial percolation phase transition in dimensions $d\geq 3$, see \cite{TW-cylinders,HST-PC-d3}.
 \emph{Boolean model} is the range of a Poisson cloud of Euclidean balls with random radii in $\R^d$ ($d\geq 2$), whose density is governed by a parameter $\alpha>0$. It is a classical model of continuum percolation and has been comprehensively studied, see e.g.\ \cite{MR-Book, Gouere08} and more recent \cite{DRT-Boolean,ATT18,Pen18} for developments on the phase transition in the occupied and the vacant set. 

\smallskip

We refer to seminal papers \cite{Sznitman-BI,TW-cylinders} and to \cite{MR-Book} for the precise constructions of these models in terms of Poisson point processes and their basic properties. In fact, for the purpose of this article, we only need the characterization of the laws of these random sets by the functional 
\[
T(K) = \mathsf P[\mathcal C\cap K\neq\emptyset],\quad K\subset\R^d\text{ compact}\footnote{For more on this topic, we refer to \cite[Chapter~2]{Matheron}.}
\]
(and monotonicity in the radius $\rho$ for the upper bound in \eqref{intro:visibility-bounds}), see \eqref{eq:BI-capacity-rho}, \eqref{eq:PC-mu-rho} and \eqref{eq:BM-mu}. 

\smallskip

We now present our main result. 
\begin{theorem}\label{intro:visibility-mainresult}
Let $\alpha>0$ and $\rho>0$. Let $\mathsf Q$ be a probability measure on $\R_+$. 
\begin{enumerate}
\item
Brownian interlacements at level $\alpha$ with radius $\rho$ in $\R^d$ ($d\geq 3$) satisfies the bounds \eqref{intro:visibility-bounds} for all $r\geq 2$ with some $c_i=c_i(d,\alpha,\rho)$ and 
\[
\delta(r) = \left\{\begin{array}{ll} r^{-1} & d\geq 4\\[4pt] r^{-1}\log^2r & d=3\end{array}\right.
\]
\item
Poisson cylinders at level $\alpha$ with radius $\rho$ in $\R^d$ ($d\geq 2$) satisfies the bounds \eqref{intro:visibility-bounds} for all $r\geq 1$ with some $c_i=c_i(d,\alpha,\rho)$ and 
\[
\delta(r) = \left\{\begin{array}{ll} r^{-1} & d\geq 3\\[4pt] 1 & d=2\end{array}\right.
\]
\item
Boolean model with intensity $\alpha$ and radii distribution $\mathsf Q$ in $\R^d$ ($d\geq 2$) satisfies the bounds \eqref{intro:visibility-bounds} for all $r\geq 1$ with some $c_i=c_i(d,\alpha,\mathsf Q)$ and 
\[
\delta(r) = r^{-1}.
\]
\end{enumerate}
\end{theorem}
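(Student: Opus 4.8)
The plan is to reduce all three statements to estimates on the set functional $\Lambda(K):=-\log\mathsf P[\mathcal C\cap K=\emptyset]$, which by \eqref{eq:BI-capacity-rho}, \eqref{eq:PC-mu-rho}, \eqref{eq:BM-mu} is an explicit capacity-type quantity ($\alpha$ times the Newtonian capacity of $K^\rho$ for Brownian interlacements, the invariant measure of the lines meeting $K^\rho$ for Poisson cylinders, $\alpha\,\E_{\mathsf Q}|K\oplus B(0,R)|$ for the Boolean model). In all three cases $\Lambda$ is monotone and subadditive, $f(r)=e^{-\Lambda([0,re_1])}$, and $f(r)$ decays at least stretched-exponentially, so that $f(r)^{\varepsilon}$ is negligible against any fixed power of $r$. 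The first step, carried out separately for each model by the corresponding potential-theoretic / integral-geometric / volume computation, is to check that $\delta(r)$ is the right \emph{calibration scale}: writing $\Lambda_s(r):=\Lambda([0,re_1]^{s})$ for the $s$-fattened segment, the increment $\Lambda_s(r)-\Lambda_0(r)$ vanishes at $s=0$, is of order $1$ when $s\asymp\delta(r)$ and has derivative $\gtrsim1/\delta(r)$ there; and, for the ``wedge'' $W_{u,v}:=[0,ru]\cup[0,rv]$ with $\angle(u,v)=\theta$, one has $\Lambda(W_{u,v})\ge\Lambda_0(r)+c\big((\Lambda_{r\theta}(r)-\Lambda_0(r))\wedge\Lambda_0(r)\big)$, so that near its far end the wedge behaves like the segment fattened on one side by $\asymp r\theta$ and the crossover in $\theta$ sits at $\theta\asymp\delta(r)/r$.

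\smallskip

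For the \emph{lower bound} I would run a second moment argument. Let $Z:=\sigma_{d-1}\!\left(\{v\in S^{d-1}:[0,rv]\subseteq\mathcal V\}\right)$ be the spherical measure of the set of clear directions; since $\mathcal V$ is open this set of directions is open, hence $\{Z>0\}=\{0\stackrel{L}{\longleftrightarrow}\partial B(0,r)\}$ up to a null event, and by Cauchy--Schwarz $P_{\mathrm{vis}}(r)\ge(\E Z)^2/\E[Z^2]$. Rotational invariance gives $\E Z=\sigma_{d-1}(S^{d-1})f(r)$ and $\E[Z^2]=\int\!\!\int e^{-\Lambda(W_{u,v})}\,d\sigma(u)\,d\sigma(v)$, an integral depending only on $\theta=\angle(u,v)$. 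Splitting at $\theta_\ast\asymp\delta(r)/r$: for $\theta\le\theta_\ast$ the trivial bound $\Lambda(W_{u,v})\ge\Lambda_0(r)$ contributes $\lesssim\theta_\ast^{d-1}f(r)\asymp(\delta(r)/r)^{d-1}f(r)$; for $\theta\ge\theta_\ast$ the wedge lower bound turns $\int_{\theta_\ast}^{\pi}e^{-(\Lambda(W_{u,v})-\Lambda_0(r))}\theta^{d-2}\,d\theta$ into $\lesssim\theta_\ast^{d-1}$, because $\Lambda(W_{u,v})-\Lambda_0(r)$ grows from $0$ at $\theta_\ast$ at rate $\gtrsim1/\theta_\ast$ and, once $r\theta\gtrsim1$, is $\gtrsim\Lambda_0(r)$, so the remaining range only contributes $\lesssim f(r)^{1+c}$. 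Hence $\E[Z^2]\lesssim(\delta(r)/r)^{d-1}f(r)$ and $P_{\mathrm{vis}}(r)\gtrsim(r/\delta(r))^{d-1}f(r)$.

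\smallskip

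For the \emph{upper bound} I would cover $S^{d-1}$ by $N\asymp(r/\delta(r))^{d-1}$ spherical caps $D_1,\dots,D_N$ of angular radius $\beta\asymp\delta(r)/r$ (with a smaller, $\rho$-dependent radius for Poisson cylinders in $d=2$ with small $\rho$, harmlessly absorbed into $c_i$), so that any clear segment $[0,y]$ with $y\in\partial B(0,r)$ has $y/|y|\in D_i$ for some $i$, and by a union bound and rotational invariance $P_{\mathrm{vis}}(r)\le N\cdot\mathsf P[\,\exists\,v\in D_0:[0,rv]\subseteq\mathcal V\,]$ for one fixed cap $D_0$ centred at $e_1$. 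It remains to prove $\mathsf P[\,\exists\,v\in D_0:[0,rv]\subseteq\mathcal V\,]\le C f(r)$. The key point is geometric: if $v\in D_0$ then $[0,rv]$ stays within $c\delta(r)$ of $[0,re_1]$ along its whole length, so any ``obstacle'' of the underlying Poisson process that approaches $[0,re_1]$ to within $\rho-c\delta(r)$ approaches $[0,rv]$ to within $\rho$; consequently, if $[0,rv]$ is clear then $[0,re_1]$ is already clear for the model with radius shrunk by $c\delta(r)$ (for the Boolean model: with every ball radius $R_i$ replaced by $(R_i-c\delta(r))_+$). Therefore $\{\exists\,v\in D_0:[0,rv]\subseteq\mathcal V\}$ is contained in an event whose probability $e^{-\Lambda'([0,re_1])}$ (with $\Lambda'$ the functional of the shrunk model) differs from $f(r)=e^{-\Lambda([0,re_1])}$ by at most a bounded factor --- again by the calibration of $\delta(r)$, since re-fattening a length-$r$ tube of radius $\rho-c\delta(r)$ by $c\delta(r)$ costs $O(1)$. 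This yields the claim and hence $P_{\mathrm{vis}}(r)\le C'(r/\delta(r))^{d-1}f(r)$; note that this half uses monotonicity in $\rho$ only through the trivial coupling of the models at radii $\rho-c\delta(r)\le\rho$.

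\smallskip

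The step I expect to be the main obstacle, and which must be carried out model by model, is precisely the $\Lambda$-estimates underlying the calibration and the wedge bound: the Newtonian capacity of a thin tube and of a thin wedge in $\R^d$, where the logarithmic behaviour of the capacity of a segment in $\R^3$ produces the extra $\log^2 r$ and the linear behaviour in $\R^d$, $d\ge4$, produces $\delta(r)=r^{-1}$; the invariant measure of the lines meeting a fattened segment resp.\ wedge, which is insensitive to bounded fattening in $d=2$ and hence gives $\delta(r)=1$ there; and the elementary but slightly careful volume estimates for the Boolean model (needing only $\E_{\mathsf Q}[R^d]<\infty$, the statement being vacuous otherwise). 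The remaining measure-theoretic points --- measurability of $Z$, the a.s.\ equality $\{Z>0\}=\{0\stackrel{L}{\longleftrightarrow}\partial B(0,r)\}$, finiteness of $\Lambda$ --- are routine.
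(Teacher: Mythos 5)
Your framework coincides with the paper's: a first-moment covering argument plus a radius-shrinking coupling for the upper bound, and a second-moment/Cauchy--Schwarz argument for the lower bound, which the paper packages as a general criterion (Proposition~\ref{prop:visibility-general} and Remark~\ref{rem:visibility-general} for $d=2$ Poisson cylinders). Your ``calibration'' of $\delta(r)$ and your ``wedge bound'' are exactly conditions~\eqref{eq:visibility-general-upper} and~\eqref{eq:visibility-general-lower}, and your second-moment computation with $Z$ is the surface-measure version of the paper's variable $X$ in the proof of that proposition. So the strategy is right.

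However, that generic part is short; the substance of Theorem~\ref{intro:visibility-mainresult} is the model-by-model verification of those conditions, which your proposal states as assertions and does not prove. These are not routine. For Brownian interlacements in $d=3$, the $O(1)$ re-fattening bound $\mathrm{cap}(\ell_{re_1}(\rho))-\mathrm{cap}(\ell_{re_1}(\rho-\delta))\leq C$ at $\delta\asymp\log^2r/r$ is delicate because the Brownian escape probability from $\partial\ell_{re_1}(\rho)$ scales differently near the endpoints of the segment and in its bulk; the paper handles this with a three-piece decomposition at scales $r^{1/3},r^{2/3}$ (Section~\ref{sec:BI-upper}). The corresponding lower bound on the excess capacity $\mathrm{cap}(\ell_x(\rho)\cup\ell_{re_1}(\rho))-\mathrm{cap}(\ell_{re_1}(\rho))$ must be $\gtrsim r/\log^2 r$ (not merely $\gtrsim\mathrm{cap}$ of a sub-cylinder, which would be $r/\log r$): one $\log$ comes from the capacity of a well-chosen witness set $S\subset\ell_x(\rho)$ and the second from a non-return estimate for Brownian motion to $\ell_{re_1}(\rho)$ (Lemma~\ref{l:BI-lowerbound-nonhitting} and the two-case construction of $S$ in Section~\ref{sec:BI-lower}); your prose claim that the excess is $\gtrsim\Lambda_0(r)$ already for $r\theta\gtrsim 1$ is in fact false in $d=3$, though your $\min$-formula, read literally, gives the right (weaker, and sufficient) bound. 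For Poisson cylinders in $d\geq 3$, the wedge bound requires finding, uniformly over rotations, a coordinate hyperplane in which the projected segments and angle do not degenerate (Lemma~\ref{l:vector-projections}) before a symmetric-difference volume estimate (Lemma~\ref{l:volume-symmetric-difference}) can be applied; this is easy to overlook and is not a simple ``calibration.'' Your proposal itself acknowledges that these estimates are ``the main obstacle,'' and indeed they are the content of Sections~\ref{sec:BI}--\ref{sec:BM}; until they are carried out, what you have is an outline of the paper's proof, not a proof of the theorem.
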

One could think about $\delta(r)$ in Theorem~\ref{intro:visibility-mainresult} as a \emph{visibility window} on $\partial B(0,r)$: in essence, the conditional probability that $y\in\partial B(0,r)$ is visible from $0$, given that some $x\in\partial B(0,r)$ is, is positive uniformly in $r$ when $\|x-y\|<\delta(r)$, and smaller than $\exp(-c\frac{1}{\delta(r)}\|x-y\|)$ when $\delta(r)<\|x-y\|<1$, cf.\ Proposition~\ref{prop:visibility-general}. This may explain the bounds~\eqref{intro:visibility-bounds}, since $\big(\frac{r}{\delta(r)}\big)^{d-1}f(r)$ is the order of the expected number of balls of radius $\delta(r)$ in a minimal covering of $\partial B(0,r)$, whose centers are visible from $0$. In this regard, it is interesting to see different scalings for different models and different dimensions. In the case of Brownian interlacements, the inverse of $\delta(r)$ equals (up to a constant factor) to the product of the Newtonian capacity of a cylinder with length $r$ and fixed radius (see \eqref{def:capacity} and \eqref{eq:capacity-cylinder}) and the probability that a Brownian motion avoids the cylinder, when started at a fixed distance from it; in particular, in dimension $d=3$, one of the $\log$'s comes from the capacity and the second one from the non-hitting probability (see \eqref{eq:capacity-cylinder} and \eqref{eq:BI-lowerbound-main}).

\smallskip

Let us compare our results to previous works. 
Partial result about Brownian interlacements was obtained by Elias and Tykesson \cite{ET-visibility}. They proved that (a) $P_{\mathrm{vis}}(r)\leq Cr^{2(d-1)}f(r)$ in dimensions $d\geq 3$ and (b) $P_{\mathrm{vis}}(r)\geq cr^{d-1}f(r)$ in dimensions $d\geq 4$, see \cite[Theorem~2.3]{ET-visibility}. In particular, their upper bound turns to be sharp in dimensions $d\geq 4$, which was not apparent from their analysis. 
In the setting of Boolean model with constant radii, Calka et al.\ \cite{Calka-visibility} were interested in the logarithmic asymptotics of $P_{\mathrm{vis}}(r)$; they obtained exact limit in $d=2$ and upper and lower bounds with different constants in $d\geq 3$, see \cite[Proposition~2.3]{Calka-visibility}. 
In Lemma~\ref{l:BM-fr}, we compute for general Boolean models that 
$f(r) = \exp\big(-\alpha(\kappa_d\mathsf E[\rho^d]+\kappa_{d-1}\mathsf E[\rho^{d-1}]r)\big)$, where $\kappa_s$ is the volume of the $s$-dimensional Euclidean unit ball and $\rho$ is a generic random variable with distribution $\mathsf Q$. Thus, by Theorem~\ref{intro:visibility-mainresult}(3), if $\mathsf E[\rho^d]<\infty$, then
\[
\lim\limits_{r\to\infty}\frac1r\log P_{\mathrm{vis}}(r) = \lim\limits_{r\to\infty}\frac1r\log f(r) = -\alpha \kappa_{d-1}\mathsf E[\rho^{d-1}],
\]
which sharpens the upper bound in \cite[Proposition~2.3]{Calka-visibility} and extends the result to Boolean models with arbitrary radii distributions.

\smallskip

The rest of the paper is organized as follows. In Section~\ref{sec:general-conditions} we fix common notation used in the proofs and introduce some general conditions on correlations, which imply bounds \eqref{intro:visibility-bounds}, see Proposition~\ref{prop:visibility-general}. We then define Brownian interlacements, Poisson cylinders and Boolean models precisely and prove---by verifying conditions of Propostion~\ref{prop:visibility-general}---the three parts of Theorem~\ref{intro:visibility-mainresult}, respectively, in Sections~\ref{sec:BI}, \ref{sec:PC} and \ref{sec:BM}.

\section{Notation and general conditions}\label{sec:general-conditions}
In this section, we introduce some common notation, used throughout the paper, and discuss general conditions, which imply bounds \eqref{intro:visibility-bounds}. 

\smallskip

Let $x\in\R^d$, $\rho>0$ and $K,K'\subset \R^d$. We denote by $B(x,\rho)$ the closed Euclidean ball at $x$ with radius $\rho$ and write $B(\rho)=B(0,\rho)$. We denote the closed $\rho$-neighborhood of $K$ by $B(K,\rho$), that is $B(K,\rho) = \bigcup_{x\in K}B(x,\rho)$. 
We denote by $\ell_x$ the line segment $[0,x]$ in $\R^d$ and define $\ell_x(\rho) = B(\ell_x,\rho)$. For convenience, we refer to $\ell_x(\rho)$ as (finite) cylinder of radius $\rho$ around $\ell_x$. 
The Euclidean distance between $K$ and $K'$ is denoted by $d(K,K')$ and we write $d(x,K)$ instead of $d(\{x\},K)$ for the distance from $x$ to $K$. 
We write $e_1$ for the first vector in the canonical basis of $\R^d$. 

\smallskip

The general first step in the proof of the relations \eqref{intro:visibility-bounds} between $P_{\mathrm{vis}}(r)$ and $f(r)$ is an application of the first (for the upper bound) and the second (for the lower bound) moment methods. To avoid repetitions, we summarize this in the next proposition. 

\begin{proposition}\label{prop:visibility-general}
Let $\mathcal C$ be a random rotationally invariant closed set in $\R^d$. 

There exist constants $c=c(d)>0$ and $C=C(d)<\infty$, such that for any $\delta>0$, $r>0$, $s\in(0,2r]$ and $D<\infty$,  
\begin{enumerate}
\item
(Upper bound) if 
\begin{equation}\label{eq:visibility-general-upper}
\mathsf P\big[0\stackrel{L}\longleftrightarrow \partial B(r)\cap B(re_1,\delta)\big]
\leq D f(r),
\end{equation}
then 
\[
P_{\mathrm{vis}}(r)\leq CD \big(\tfrac{r}{\delta}\big)^{d-1}f(r).
\]
\item
(Lower bound) if
\begin{equation}\label{eq:visibility-general-lower}
\mathsf P\big[0\stackrel{L}\longleftrightarrow x\,|\,0\stackrel{L}\longleftrightarrow re_1\big]\leq D\exp\big(-\tfrac{1}{\delta}\min(\|x-re_1\|,1)\big),\quad\text{for all $x\in\partial B(r)\cap B(re_1,s)$,}
\end{equation}
then 
\[
P_{\mathrm{vis}}(r)\geq cD^{-1} \big(\delta^{d-1}+s^{d-1}e^{-\frac1\delta}\big)^{-1}s^{d-1}f(r).\footnote{In most applications, $s$ is chosen as a multiple of $r$ and $\delta$ as a function of $r$, which decays polynomially to $0$; in particular, $s^{d-1}e^{-\frac1\delta}=o(\delta^{d-1})$. See also Remark~\ref{rem:visibility-general} for a variation of part 2.\ of Proposition~\ref{prop:visibility-general}.}
\]
\end{enumerate}
\end{proposition}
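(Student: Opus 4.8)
The plan is to prove the two bounds by covering $\partial B(r)$ with $O\big((r/\delta)^{d-1}\big)$ spherical caps of radius $\delta$, applying first-moment (union bound) for the upper bound and second-moment (Paley--Zygmund) for the lower bound, in both cases transferring the problem to a ``given direction'' via rotational invariance.

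\textbf{Upper bound.} First I would fix a maximal $\delta$-separated set of points $x_1,\dots,x_N$ on $\partial B(r)$; a standard volume (or surface area) comparison gives $N\le C(d)(r/\delta)^{d-1}$, and the caps $\partial B(r)\cap B(x_i,\delta)$ cover $\partial B(r)$. If $0\stackrel{L}\longleftrightarrow\partial B(r)$, then $0\stackrel{L}\longleftrightarrow\partial B(r)\cap B(x_i,\delta)$ for some $i$, so a union bound gives
\[
P_{\mathrm{vis}}(r)\le \sum_{i=1}^N \mathsf P\big[0\stackrel{L}\longleftrightarrow \partial B(r)\cap B(x_i,\delta)\big].
\]
Since $\mathcal C$ is rotationally invariant, each summand equals $\mathsf P\big[0\stackrel{L}\longleftrightarrow \partial B(r)\cap B(re_1,\delta)\big]$, which by hypothesis \eqref{eq:visibility-general-upper} is at most $Df(r)$. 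Combining with $N\le C(r/\delta)^{d-1}$ gives the claim with $C=C(d)$.

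\textbf{Lower bound.} Here I would introduce the random variable $Z=\int_{\partial B(r)\cap B(re_1,s)} \mathbf 1\{0\stackrel{L}\longleftrightarrow x\}\,\sigma_r(\mathrm dx)$, where $\sigma_r$ is the surface measure on $\partial B(r)$. By Fubini and rotational invariance, $\mathsf E[Z]=\sigma_r\big(\partial B(r)\cap B(re_1,s)\big)\cdot f(r)\asymp s^{d-1}f(r)$ (using $s\le 2r$, so the cap has radius comparable to $s$ and surface measure of order $s^{d-1}$). For the second moment, $\mathsf E[Z^2]=\int\!\!\int \mathsf P[0\stackrel{L}\longleftrightarrow x,\ 0\stackrel{L}\longleftrightarrow y]\,\sigma_r(\mathrm dx)\sigma_r(\mathrm dy)$; bounding $\mathsf P[0\stackrel{L}\longleftrightarrow x,\,0\stackrel{L}\longleftrightarrow y]\le \mathsf P[0\stackrel{L}\longleftrightarrow x]\cdot\mathsf P[0\stackrel{L}\longleftrightarrow y\mid 0\stackrel{L}\longleftrightarrow x]$, using rotational invariance for the first factor ($=f(r)$) and hypothesis \eqref{eq:visibility-general-lower} for the second (after rotating $x$ to $re_1$, which maps $y$ to a point at the same distance $\|y-x\|$ from $re_1$, still on $\partial B(r)$, still within distance $\le 2s$... here I would be slightly careful and either arrange the hypothesis range to cover $2s$ or restrict the outer integral to a cap of radius $s/2$ — a harmless constant loss), I get
\[
\mathsf E[Z^2]\le Df(r)\int_{\partial B(r)\cap B(re_1,s)}\!\!\sigma_r(\mathrm dx)\int_{\partial B(r)}\!\! e^{-\frac1\delta\min(\|y-x\|,1)}\,\sigma_r(\mathrm dy).
\]
The inner integral over $y$ is bounded, uniformly in $x$, by $C(d)\big(\delta^{d-1}+s^{d-1}e^{-1/\delta}\big)$: split according to $\|y-x\|\le 1$ (where the exponential integrates to $O(\delta^{d-1})$ against the flat surface measure) versus $\|y-x\|>1$ (where the integrand is $\le e^{-1/\delta}$ over a region of surface measure $O(s^{d-1})$, since we are inside the cap of radius $s$). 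Hence $\mathsf E[Z^2]\le C D f(r)\, s^{d-1}\big(\delta^{d-1}+s^{d-1}e^{-1/\delta}\big)$. The Paley--Zygmund inequality $\mathsf P[Z>0]\ge \mathsf E[Z]^2/\mathsf E[Z^2]$ then yields
\[
P_{\mathrm{vis}}(r)\ge \mathsf P[Z>0]\ge c\,\frac{(s^{d-1}f(r))^2}{Df(r)s^{d-1}(\delta^{d-1}+s^{d-1}e^{-1/\delta})}= cD^{-1}\big(\delta^{d-1}+s^{d-1}e^{-1/\delta}\big)^{-1}s^{d-1}f(r),
\]
as desired.

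\textbf{Main obstacle.} The routine parts are the covering count and Fubini; the one genuine point of care is the geometric bookkeeping on the sphere — verifying that the surface measure of the cap $\partial B(r)\cap B(re_1,s)$ is comparable to $s^{d-1}$ uniformly for $s\in(0,2r]$ (the ``flat vs.\ curved'' comparison, which needs $s\le 2r$ so the cap is at most a hemisphere), and that rotating $x\in\partial B(r)$ to $re_1$ keeps the second point $y$ in the regime where hypothesis \eqref{eq:visibility-general-lower} applies. Both are handled by elementary estimates and by absorbing a bounded factor into the dimensional constants $c(d),C(d)$; no model-specific input is needed at this stage.
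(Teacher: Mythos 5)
Your proposal follows the paper's proof in all essentials: a $\delta$-net covering of $\partial B(r)$ plus a union bound and rotational invariance for the upper bound, and a second-moment / Paley--Zygmund argument with the same integral estimate for the lower bound. The one place you hedge is resolved exactly as you suggest, and is what the paper does: define the integrating random variable over the smaller cap $S=\partial B(r)\cap B(re_1,\tfrac{s}{2})$, so that any two points of the double integral are at mutual distance at most $s$, hence after rotating one of them to $re_1$ the other lands in $S'=\partial B(r)\cap B(re_1,s)$ where hypothesis \eqref{eq:visibility-general-lower} applies (your phrasing ``restrict the outer integral'' alone would not suffice, since you need both points in the smaller cap, but this is clearly the intent and costs only a dimensional constant); the remaining integral over $S'$ is then bounded by decomposing it into annuli of width $\delta$, giving $C(\delta^{d-1}+s^{d-1}e^{-1/\delta})$ as you sketch.
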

The proof of part (1) is essentially the same as \cite[(4.17)]{ET-visibility}. The proof of part (2) is a refinement of the second moment argument in \cite[Section~4.1]{ET-visibility}.
\begin{proof}
Assume \eqref{eq:visibility-general-upper}. 
There is a covering of $\partial B(r)$ by at most $C\big(\tfrac{r}{\delta}\big)^{d-1}$ balls of radius $\delta$ centered on $\partial B(r)$ for some $C=C(d)$. 
If $\partial B(r)$ is visible from $0$, then the part of the boundary covered by one of the balls is visible from $0$. 
Thus, by rotational invariance of $\mathcal C$, 
\[
P_{\mathrm{vis}}(r) \leq C\big(\tfrac{r}{\delta}\big)^{d-1}\,
\mathsf P\big[0\stackrel{L}\longleftrightarrow \partial B(r)\cap B(re_1,\delta)\big]
\stackrel{\eqref{eq:visibility-general-upper}}\leq 
CD \big(\tfrac{r}{\delta}\big)^{d-1}f(r),
\]
which proves the first part of the proposition. 

\smallskip

Now, assume \eqref{eq:visibility-general-lower}. Consider spherical caps $S=\partial B(r)\cap B(re_1,\frac s2)$ and $S' = \partial B(r)\cap B(re_1,s)$. Let $\sigma(\cdot)$ be the surface measure on $\partial B(r)$. Note that $c_1 s^{d-1}\leq \sigma(S)\leq \sigma(S') \leq c_2 s^{d-1}$, for some $c_1=c_1(d)$ and $c_2=c_2(d)$. 
Consider 
\[
X=\int\limits_S\mathds{1}_{\{0\stackrel{L}\longleftrightarrow x\}}\sigma(dx)
\]
and note that $P_{\mathrm{vis}}(r)\geq \mathsf P [X>0]$.
By Fubini and rotational invariance of $\mathcal C$, $\mathsf E[X] = \sigma(S)f(r) \geq c_1 s^{d-1}f(r)$ and 
\begin{eqnarray*}
\mathsf E[X^2] &= &\int\limits_{S\times S} \mathsf P[0\stackrel{L}\longleftrightarrow x,0\stackrel{L}\longleftrightarrow y]\sigma(dx)\sigma(dy)\\
&\leq &\sigma(S)f(r)\,\int\limits_{S'}\mathsf P[0\stackrel{L}\longleftrightarrow x\,|\,0\stackrel{L}\longleftrightarrow re_1]\sigma(dx)\\
&\stackrel{\eqref{eq:visibility-general-lower}}\leq 
&D\,\mathsf E[X]\,\int\limits_{S'}\exp\big(-\tfrac1\delta\min(\|x-re_1\|,1)\big)\sigma(dx).\\
\end{eqnarray*}
To bound the integral, we write $S'$ as the disjoint union of sets 
$S_k = \{x\in S'\,:\,(k-1)\delta\leq \|x-re_1\|<k\delta\}$ and dominate the integral over $S_k$ by $\sigma(S_k)e^{-k+1}$, when $k<1+\tfrac1\delta$, and by $\sigma(S_k)e^{-\frac1\delta}$ otherwise. 
Since $\sigma(S_k)\leq c_2(k\delta)^{d-1}$ and $\sum_k\sigma(S_k) = \sigma(S')\leq c_2 s^{d-1}$, the total integral over $S'$ is bounded from above by $c_3(\delta^{d-1} + s^{d-1}e^{-\frac1\delta})$, for some $c_3=c_3(d)$. Thus, 
\[
\mathsf E[X^2] \leq c_3 D(\delta^{d-1}+s^{d-1}e^{-\frac1\delta})\,\mathsf E[X].
\]
By the Paley-Zygmund inequality, 
\[
P_{\mathrm{vis}}(r) \geq \mathsf P[X>0]\geq \frac{\mathsf E[X]^2}{\mathsf E[X^2]}
\geq \tfrac{c_1}{c_3D}\big(\delta^{d-1}+s^{d-1}e^{-\frac1\delta}\big)^{-1}s^{d-1}f(r).
\]
The proof is completed. 
\end{proof}
\begin{remark}\label{rem:visibility-general}
Essentially the same proof gives the following variation of part 2.\ of Proposition~\ref{prop:visibility-general}: if instead of \eqref{eq:visibility-general-lower}, 
\begin{equation}\label{eq:visibility-general-lower-2}
\mathsf P\big[0\stackrel{L}\longleftrightarrow x\,|\,0\stackrel{L}\longleftrightarrow re_1\big]\leq D\exp\big(-\tfrac{1}{\delta}\|x-re_1\|\big),\quad\text{for all $x\in\partial B(r)\cap B(re_1,s)$,}
\end{equation}
then 
\[
P_{\mathrm{vis}}(r)\geq cD^{-1}\big(\tfrac s\delta\big)^{d-1}f(r).
\]
We will use this in the proof of Theorem~\ref{intro:visibility-mainresult} for the Poisson cylinders in $d=2$. 
\end{remark}

\section{Brownian interlacements}\label{sec:BI}

For $\alpha>0$, let $\mathcal I^\alpha$ be a random closed subset of $\R^d$, whose law is characterized by the relations 
\begin{equation}\label{eq:BI-capacity}
\mathsf P \big[\mathcal I^\alpha\cap K = \emptyset\big] = e^{-\alpha\mathrm{cap}(K)},\quad\text{for compact }K\subset\R^d,
\end{equation}
where $\mathrm{cap}(K)$ is the Newtonian capacity of $K$, see \eqref{def:capacity}. 
The \emph{Brownian interlacement at level $\alpha$ with radius $\rho$} is the closed random subset $\mathcal I^\alpha_\rho$ of $\R^d$, defined as 
\[
\mathcal I^\alpha_\rho = B(\mathcal I^\alpha,\rho) = \bigcup\limits_{x\in\mathcal I^\alpha} B(x,\rho).
\]
By \eqref{eq:BI-capacity}, the law of $\mathcal I^\alpha_\rho$, denoted by $\mathsf P^\alpha_\rho$, is characterized by the relations
\begin{equation}\label{eq:BI-capacity-rho}
\mathsf P \big[\mathcal I^\alpha_\rho\cap K = \emptyset\big] = e^{-\alpha\mathrm{cap}(B(K,\rho))},\quad\text{for compact }K\subset\R^d
\end{equation}
(see \cite[(2.32)]{Sznitman-BI}). 

\smallskip

We first introduce capacity and its properties in Section~\ref{sec:BI-capacity} and then prove part (1) of Theorem~\ref{intro:visibility-mainresult} in Sections~\ref{sec:BI-lower} (lower bound of \eqref{intro:visibility-bounds}) and \ref{sec:BI-upper} (upper bound of \eqref{intro:visibility-bounds}).

\subsection{Brownian motion and capacity}\label{sec:BI-capacity}

Let $W$ be a Brownian motion in $\R^d$. We denote by $\mathsf P_x$ the law of $W$ with $W_0=x$ and we write $\mathsf P_\nu$ for $\int_{\R^d}\mathsf P_x[\cdot]\nu(dx)$. For a closed set $K\subset\R^d$, let $H_K=\inf\{t\geq 0\,:\,W_t\in K\}$ be the first entrance time of $W$ in $K$. 
Classically, for any $R_1<R_2$ and $y\in\R^d$ with $R_1<\|y\|<R_2$, 
\begin{equation}\label{eq:BM-hitting}
\mathsf P_y\big[H_{\partial B(R_2)}<H_{\partial B(R_1)}\big] = 
\left\{\begin{array}{ccl}\frac{R_1^{2-d}-\|y\|^{2-d}}{R_1^{2-d}-R_2^{2-d}} &\quad& d=1\text{ or }d\geq 3\\[10pt] 
\frac{\log R_1 - \log \|y\|}{\log R_1 - \log R_2} &\quad& d=2\end{array}\right.
\end{equation}
(see e.g.\ \cite[Theorem~3.18]{MP-BM-book}), in particular, when $d\geq 3$, 
\begin{equation}\label{eq:BM-escape}
\mathsf P_y\big[H_{\partial B(R_1)}=\infty\big] = 1 - \frac{\|y\|^{2-d}}{R_1^{2-d}}\,.
\end{equation}

\smallskip

Let $\sigma_R$ be the uniform distribution on $\partial B(R)$ and define $\mu_R=\tfrac{2\pi^{d/2}R^{d-2}}{\Gamma(d/2-1)}\sigma_R$. The Newtonian capacity of compact set $K$ in $\R^d$ ($d\geq 3$) is defined as 
\begin{equation}\label{def:capacity}
\mathrm{cap}(K) = \mathsf P_{\mu_R}[H_K<\infty], \quad\text{for any $R$, such that $K\subset B(R)$,}
\end{equation}
see e.g.\ \cite[Ch.~3, Th.~1.10]{PortStone}.
Capacity is an invariant under isometries and monotone function on compacts (see \cite[Ch.~3, Prop.~1.12]{PortStone}), and $\mathrm{cap}(K) = \mathrm{cap}(\partial K)$ (see \cite[Ch.~3, Prop.~1.11]{PortStone}). 

\smallskip

By \cite[Ch.~3, Prop.~3.4]{PortStone} (see also \cite[Lemma~3.1]{ET-visibility}), 
there exist constants $c_i=c_i(d,\rho)$, such that for all $r\geq 2$, the capacity of cylinder $\ell_{re_1}(\rho)$ of radius $\rho$ satisfies the bounds
\begin{equation}\label{eq:capacity-cylinder}
c_1\varphi(r)\leq \mathrm{cap}\big(\ell_{re_1}(\rho)\big) \leq c_2\varphi(r),\quad 
\text{with}\,\,\,\varphi(r) = \left\{\begin{array}{ccl} r &\,\,& d\geq 4\\[4pt] \frac{r}{\log r} &\,\,& d=3\end{array}\right.
\end{equation}

\subsection{Proof of the lower bound of \eqref{intro:visibility-bounds}}\label{sec:BI-lower}
Fix $\alpha>0$ and $\rho>0$. Here, we prove that Brownian interlacements $\mathcal I^\alpha_\rho$ satisfies the lower bound of \eqref{intro:visibility-bounds} with $\delta(r)$ as in part (1) of Theorem~\ref{intro:visibility-mainresult}. 
By Proposition~\ref{prop:visibility-general}, it suffices to show that condition \eqref{eq:visibility-general-lower} holds with $\delta=D_1(d,\alpha,\rho)\delta(r)$, $s=2r$ and $D=D_2(d,\alpha,\rho)$. 
By \eqref{eq:BI-capacity-rho}, for $x\in\partial B(r)$, 
\[
\mathsf P^\alpha_\rho[0\stackrel{L}\longleftrightarrow x\,|\,0\stackrel{L}\longleftrightarrow re_1]= 
\exp\Big(-\alpha\big(\mathrm{cap}(\ell_x(\rho)\cup\ell_{re_1}(\rho)) - \mathrm{cap}(\ell_{re_1}(\rho))\big)\Big).
\]
Thus, it suffices to prove that there exist $c=c(d,\rho)$ and $C=C(d,\rho)$ such that for all $r\geq 2$, 
\begin{equation}\label{eq:BI-lowerbound-capacities}
\mathrm{cap}(\ell_x(\rho)\cup\ell_{re_1}(\rho)) - \mathrm{cap}(\ell_{re_1}(\rho))\geq c\tfrac{1}{\delta(r)}\min(\|x-re_1\|,1) - C.
\end{equation}
By the definition of capacity \eqref{def:capacity} and the strong Markov property, for any set $S\subseteq \ell_x(\rho)\setminus \ell_{re_1}(\rho)$ and any $R>r+\rho$, 
\[
\mathrm{cap}(\ell_x(\rho)\cup\ell_{re_1}(\rho)) - \mathrm{cap}(\ell_{re_1}(\rho)) 
\geq \mathsf P_{\mu_R}\big[H_S<H_{\ell_{re_1}(\rho)}]\,\inf\limits_{y\in S}\mathsf P_y[H_{\ell_{re_1}(\rho)}=\infty].
\]
Thus, \eqref{eq:BI-lowerbound-capacities} will follow if we show that for some choice of $S$, for all $r\geq r_0(d,\rho)$ (large enough) and some $c=c(d,\rho)>0$, 
\begin{equation}\label{eq:BI-lowerbound-main}
 \mathsf P_{\mu_R}\big[H_S<H_{\ell_{re_1}(\rho)}]\geq c\varphi(r) \quad\text{and}\quad 
 \inf\limits_{y\in S}\mathsf P_y[H_{\ell_{re_1}(\rho)}=\infty] \geq c \psi(r)\min(\|x-re_1\|,1),
\end{equation}
where $\varphi$ is defined in \eqref{eq:capacity-cylinder} and $\psi(r)= 1$ for $d\geq 4$ and $\frac{1}{\log r}$ for $d=3$. 

\smallskip

For the second part of \eqref{eq:BI-lowerbound-main}, we use the following general lemma. 
\begin{lemma}\label{l:BI-lowerbound-nonhitting}
There exists $c=c(d,\rho)>0$ such that for all $y\in \R^d$, 
\[
\mathsf P_y[H_{\ell_{re_1}(\rho)}=\infty] \geq c \psi(r) \min\big(d(y,\ell_{re_1}(\rho)),1\big).
\]
\end{lemma}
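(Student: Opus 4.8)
The plan is to prove Lemma~\ref{l:BI-lowerbound-nonhitting} by distinguishing whether $y$ is far from or close to the cylinder $\ell_{re_1}(\rho)$, and in the far regime reducing the non-hitting probability to an escape estimate against a ball, while in the close regime using a two-step argument: first move to distance of order $1$ from the cylinder with uniformly positive probability, then avoid it forever. First I would dispose of the trivial case: if $d(y,\ell_{re_1}(\rho))\geq 1$, then it suffices to bound $\mathsf P_y[H_{\ell_{re_1}(\rho)}=\infty]$ below by a constant times $\psi(r)$. To this end, note that $\ell_{re_1}(\rho)\subset B(K_r,\rho)$ where $K_r$ is a cylinder; enclosing $\ell_{re_1}(\rho)$ in a ball $B(\frac r2 e_1, R_r)$ with $R_r$ of order $r$, one has by the strong Markov property and \eqref{eq:BM-escape} that from any point at distance $\asymp 1$ from the cylinder the walk escapes $B(\frac r2 e_1, 2R_r)$ before returning to a $1$-neighborhood of the cylinder with probability bounded below, and then from $\partial B(\frac r2 e_1, 2R_r)$ it avoids $\ell_{re_1}(\rho)$ forever with probability $\geq c\,\mathrm{cap}(\ell_{re_1}(\rho))^{-1}\cdot(\text{something})$; more cleanly, I would invoke \eqref{eq:capacity-cylinder} together with the identity $\mathrm{cap}(\ell_{re_1}(\rho)) = \lim_R \mathsf P_{\mu_R}[H_{\ell_{re_1}(\rho)}<\infty]$ and a last-exit decomposition to see that the equilibrium measure of the cylinder puts mass $\asymp 1/\varphi(r)$ of ``escape'', i.e.\ the probability of non-return from the equilibrium measure is $\asymp 1/\varphi(r)\cdot\varphi(r)$...

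Let me restate the far regime more carefully. For $d(y,\ell_{re_1}(\rho))\geq 1$, enclose the cylinder in a ball $B=B(\tfrac r2 e_1, c_0 r)$ for a suitable constant $c_0$. If $\|y-\tfrac r2 e_1\|\geq 2c_0 r$, use \eqref{eq:BM-escape} to get $\mathsf P_y[H_B=\infty]\geq 1-(2c_0r/\|y-\tfrac r2 e_1\|)^{d-2}\geq c$ and hence the walk never hits the cylinder with probability $\geq c$, which dominates $c\psi(r)$. If instead $1\leq d(y,\ell_{re_1}(\rho))$ but $y$ is inside $2B$, decompose: with probability $\geq c$ the walk reaches $\partial B(\tfrac r2 e_1, 3c_0 r)$ before hitting the $\tfrac12$-neighborhood of the cylinder (a Harnack/hitting estimate comparing the target sphere to the thin cylinder, where the $\log$ in $d=3$ appears), and from $\partial B(\tfrac r2 e_1,3c_0r)$ it escapes to infinity avoiding the cylinder with probability $\geq c\psi(r)$; here I would compare to the cylinder's capacity via \eqref{def:capacity}, \eqref{eq:BM-escape} and \eqref{eq:capacity-cylinder}: $\mathsf P_{\sigma_{3c_0r}}[H_{\ell_{re_1}(\rho)}=\infty]\asymp 1-\mathrm{cap}(\ell_{re_1}(\rho))/\mathrm{cap}(B(3c_0r))\asymp 1$ for $d\geq 4$, and the $d=3$ case needs the extra $\log r$ because the relevant comparison sphere has logarithmic capacity relative to the cylinder. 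Actually the cleanest route for $d=3$: the probability that Brownian motion from $\partial B(3c_0 r)$ hits the thin cylinder $\ell_{re_1}(\rho)$ before exiting $B(r^2)$, say, is $\asymp \mathrm{cap}(\ell_{re_1}(\rho))\cdot\text{(potential factor)}\asymp \frac{r}{\log r}\cdot\frac{1}{r}=\frac{1}{\log r}$, so escape to infinity avoiding the cylinder costs the factor $\psi(r)=\frac1{\log r}$.

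For the close regime $0<d(y,\ell_{re_1}(\rho))=:u<1$, I would use the strong Markov property at the hitting time of $\partial B(y,1)\setminus$ (a slightly enlarged cylinder) — more precisely, let $\tau$ be the first time the walk hits $\partial B(y', \tfrac12)$ for an appropriate $y'$ at distance $\asymp 1$ from the cylinder along the normal direction. On the event $H_{\ell_{re_1}(\rho)}>\tau$, the walk is at distance $\asymp 1$ from the cylinder at time $\tau$, and by the far regime it avoids the cylinder forever with probability $\geq c\psi(r)$. So it remains to show $\mathsf P_y[H_{\ell_{re_1}(\rho)}>\tau]\geq c\,u$. Locally near $y$ the cylinder looks like a half-space (or a codimension-$1$ slab) at distance $u$, so this is a one-dimensional gambler's-ruin estimate: the first coordinate (distance to the affine hull of the nearest face) is a martingale, and $\mathsf P_y[\text{reach distance }\tfrac12 \text{ before distance }0]\asymp u$ by \eqref{eq:BM-hitting} with $d=1$ (or directly, optional stopping on the signed distance). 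One must be slightly careful that the cylinder has curvature $\asymp 1/\rho$ and endpoints, but on scale $u<1$ near the nearest point the region $\{d(\cdot,\ell_{re_1}(\rho))>0\}$ contains a half-space tangent to the cylinder, so the gambler's-ruin bound only improves. This yields $\mathsf P_y[H_{\ell_{re_1}(\rho)}=\infty]\geq \mathsf P_y[H_{\ell_{re_1}(\rho)}>\tau]\cdot \inf_{z:d(z,\cdot)\asymp1}\mathsf P_z[H_{\ell_{re_1}(\rho)}=\infty]\geq c\,u\cdot c\psi(r)$, as desired.

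The main obstacle I expect is the far-regime escape estimate with the correct $\psi(r)$, i.e.\ showing $\mathsf P_z[H_{\ell_{re_1}(\rho)}=\infty]\geq c\psi(r)$ uniformly for $z$ at distance $\asymp 1$ from a cylinder of length $r$ — getting the constant uniform in $r$ (not just in the position of $z$) and correctly isolating the single extra $\log r$ in $d=3$. This is where \eqref{eq:capacity-cylinder} is essential: one translates the non-hitting probability from a fixed nearby point into a statement about the equilibrium measure and total capacity of the cylinder via a last-exit (or Port--Stone) decomposition, and the asymptotics $\varphi(r)\asymp r$ resp.\ $r/\log r$ feed in directly. The close-regime gambler's-ruin part and the trivial far case are routine by comparison. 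I would organize the write-up as: (i) reduce to $u<1$ having handled $u\geq1$; (ii) the half-space gambler's-ruin bound giving the factor $\min(u,1)$; (iii) the uniform escape bound $\geq c\psi(r)$ from distance $\asymp1$, proved via \eqref{def:capacity}, \eqref{eq:BM-escape} and \eqref{eq:capacity-cylinder}; (iv) combine by the strong Markov property.
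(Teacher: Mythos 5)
Your overall scheme---a gambler's-ruin factor $\min(u,1)$ to get from distance $u$ to distance $\asymp 1$, followed by a uniform escape bound $\geq c\psi(r)$, combined by the strong Markov property---is the right structure and matches the paper's. The gap is in the escape estimate, and it is not a matter of polish: you have placed the $\psi(r)$ factor in the wrong sub-step, and the capacity calculation you offer in support actually contradicts the claim. You assert that the walk reaches $\partial B(\tfrac r2 e_1,3c_0 r)$ from distance $\asymp1$, avoiding the cylinder, ``with probability $\geq c$'', and that the subsequent escape to infinity from $\partial B(\tfrac r2 e_1,3c_0 r)$ ``costs the factor $\psi(r)$''. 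But your own $d=3$ computation gives that the hitting probability of the thin cylinder from $\partial B(3c_0 r)$ is $\asymp \frac{1}{\log r}$; hence the \emph{avoidance} probability from there is $\asymp 1-\frac{1}{\log r}$, i.e.\ bounded below by a constant---not by $\frac{1}{\log r}$. Conversely, the step that genuinely costs $\psi(r)$ is the one you marked as $\geq c$: in $d=3$, a Brownian motion started at transverse distance $\asymp 1$ from a cylinder of length $r$ must get to transverse distance $\asymp r$ before returning to the cylinder's surface, and (projecting onto the orthogonal hyperplane, where the process is a two-dimensional, hence recurrent, Brownian motion) this annulus-crossing has probability $\asymp \frac{\log 2}{\log(r/\rho)}\asymp\frac{1}{\log r}$ by \eqref{eq:BM-hitting}. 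Your parenthetical ``(\ldots where the $\log$ in $d=3$ appears)'' hints you sensed this, but as written the two sub-steps have their scalings swapped, and the equilibrium-measure/last-exit machinery you invoke (which only gives $\mathsf P_z[H_K<\infty]\approx\mathrm{cap}(K)/\mathrm{cap}(B(R))$ in an averaged, not pointwise, sense, and requires extra care for a cylinder stretching across the ball) does not close this.

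The paper's route sidesteps the capacity machinery entirely and makes the location of the $\log$ transparent. It decomposes the non-hitting event into: (a) exit the \emph{infinite} cylinder $B(\R e_1,2\rho)$ before hitting $\ell_{re_1}(\rho)$---the projection onto the $e_1$-axis is a one-dimensional Brownian motion and \eqref{eq:BM-hitting} gives the factor $\min(d(y,\ell_{re_1}(\rho)),1)$; (b) from $\partial B(\R e_1,2\rho)$, exit the infinite cylinder $B(\R e_1,3r)$ before hitting $B(\R e_1,\rho)$---the projection onto $\{z_1=0\}$ is a $(d-1)$-dimensional Brownian motion and \eqref{eq:BM-hitting} gives exactly $\psi(r)$ (logarithmic ratio when $d-1=2$, a constant when $d-1\geq 3$); and (c) from $\partial B(\R e_1,3r)$ never return to $B(2r)$---constant probability by \eqref{eq:BM-escape}. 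Working with infinite coaxial cylinders so that the transverse motion is an exact $(d-1)$-dimensional Brownian motion is the key technical idea you are missing; it replaces your appeals to Harnack estimates, last-exit decompositions and \eqref{eq:capacity-cylinder} (which is not used in this lemma at all) by a single explicit formula. If you reorganize your step (iii) so that $\psi(r)$ is incurred while traversing the transverse annulus from radius $\asymp 1$ to radius $\asymp r$, and treat the escape from radius $\asymp r$ as an $O(1)$ event via \eqref{eq:BM-escape}, the argument becomes correct and coincides with the paper's.
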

\begin{proof}
Let $y\in \R^d$. The Brownian motion from $y$ avoids $\ell_{re_1}(\rho)$ forever, if 
(a) it exits from the infinite cylinder $B(\R e_1,2\rho)$ before hitting $\ell_{re_1}(\rho)$, after that 
(b) it exits from the infinite cylinder $B(\R e_1,3r)$ before hitting the infinite cylinder $B(\R e_1, \rho)$, and 
(c) it never hits the ball $B(2r)$ after the first exit from the cylinder $B(\R e_1,3r)$. Thus, by the strong Markov property, 
\begin{multline*}
\mathsf P_y[H_{\ell_{re_1}(\rho)}=\infty]\geq\\ 
\mathsf P_y[H_{\partial B(\R e_1, 2\rho)}<H_{\ell_{re_1}(\rho)}]\,
\inf\limits_{z_1} 
\mathsf P_{z_1}[H_{\partial B(\R e_1, 3r)}<H_{\partial B(\R e_1, \rho)}]\,
\inf\limits_{z_2}\mathsf P_{z_2}[H_{B(2r)}=\infty],
\end{multline*}
where the first infimum is over $z_1\in\partial B(\R e_1,2\rho)$ and the second over $z_2\in\partial B(\R e_1, 3r)$. 
By \eqref{eq:BM-hitting} and the fact that the projection of $W$ on any line is a one-dimensional Brownian motion, the first probability is bounded from below by $c\min\big(d(y,\ell_{re_1}(\rho)),1\big)$, for some $c=c(d,\rho)>0$. 
By \eqref{eq:BM-hitting} and the fact that the projection of $W$ on the hyperplane $\{z\in\R^d\,:\,z_1=0\}$ is a $(d-1)$-dimensional Brownian motion, the second probability is bounded from below by $c\psi(r)$, for some $c=c(d,\rho)>0$. Finally, by \eqref{eq:BM-escape}, the third probability is bounded from below by $c=c(d)>0$. The proof is completed. 
\end{proof}

We proceed with the proof of \eqref{eq:BI-lowerbound-main}. Our choice of $S$ for \eqref{eq:BI-lowerbound-main} 
will depend on whether $\|x-re_1\|$ is smaller or bigger than some large constant $\Delta=\Delta(d,\rho)$. 
To determine $\Delta$, let $\beta=\beta(d,\rho)\in (0,1)$ be such that
\[
\mathrm{cap}(\ell_{\beta re_1}(\rho))\leq \frac14 \mathrm{cap}(\ell_{re_1}(\rho)) 
\]
for all $r$ large enough. Given such $\beta$, we fix some $\Delta=\Delta(d,\rho)$ such that for all $x\in \partial B(r)$ with $\|x-re_1\|\geq \Delta$, the distance between the line segment $[\beta x,x]$ and the positive half axis $\R_+e_1$ is at least $1+2\rho$, 
\[
d\big([\beta x,x],\R_+e_1\big)\geq 1+2\rho.
\]

\smallskip

We first prove \eqref{eq:BI-lowerbound-main} for all $x\in\partial B(r)$ with $\|x-re_1\|\geq \Delta$. In this case, we choose $S\subset \ell_x(\rho)\setminus \ell_{re_1}(\rho)$ as 
\[
S = B([\beta x,x],\rho) = \bigcup\limits_{y\in [\beta x,x]} B(y,\rho). 
\]
By the definition of $\Delta$, 
\[
d(S,\ell_{re_1}(\rho)) \geq d\big([\beta x,x],\R_+e_1\big) - 2\rho \geq 1. 
\]
Thus, by Lemma~\ref{l:BI-lowerbound-nonhitting}, 
\[
\inf\limits_{y\in S}\mathsf P_y[H_{\ell_{re_1}(\rho)}=\infty] \geq c \psi(r)
\geq c\psi(r)\min\big(\|x-re_1\|,1\big),
\]
which verifies the second inequality in \eqref{eq:BI-lowerbound-main}. 
Let $S' = B([\beta re_1, re_1],\rho)$. We have 
\begin{multline*}
\mathsf P_{\mu_R}\big[H_S<H_{\ell_{re_1}(\rho)}] 
\geq \mathsf P_{\mu_R}\big[H_S= H_{\ell_x(\rho)\cup \ell_{re_1}(\rho)}<\infty]\\
= \frac12\, \mathsf P_{\mu_R}\big[H_{S\cup S'}= H_{\ell_x(\rho)\cup \ell_{re_1}(\rho)}<\infty]\\
\geq \frac12\,\mathsf P_{\mu_R}\big[H_{\ell_x(\rho)\cup \ell_{re_1}(\rho)}<\infty] - 
\frac12\big(\mathsf P_{\mu_R}\big[H_{\ell_{\beta re_1}(\rho)}<\infty] 
+ \mathsf P_{\mu_R}\big[H_{\ell_{\beta x}(\rho)}<\infty]\big)\\
\stackrel{\eqref{def:capacity}}=\frac12\,\mathrm{cap}\big(\ell_x(\rho)\cup \ell_{re_1}(\rho)\big) - \mathrm{cap}\big(\ell_{\beta re_1}(\rho)\big) \geq \frac14\, \mathrm{cap}\big(\ell_{re_1}(\rho)\big),
\end{multline*}
where the last inequality follows from the monotonicity of the capacity and the choice of $\beta$. Now, the first inequality in \eqref{eq:BI-lowerbound-main} follows immediately from \eqref{eq:capacity-cylinder}. 

\smallskip

Next, we prove \eqref{eq:BI-lowerbound-main} for all $x\in\partial B(r)$ with $\|x-re_1\|\leq \Delta$. 
Without loss of generality, we may assume that 
\[
x = \big(r\cos\varphi, r\sin\varphi, 0,\ldots, 0\big),\quad\text{for some }\varphi\in(0,\frac\pi4). 
\]
(In fact, $\|x-re_1\| = 2r\sin\frac\varphi 2 \leq \Delta$ implies that $\varphi \leq 4\sin\frac\varphi2 \leq \frac{2\Delta}{r}$.)

\begin{figure}[!tp]
\centering
\resizebox{15cm}{!}{\includegraphics{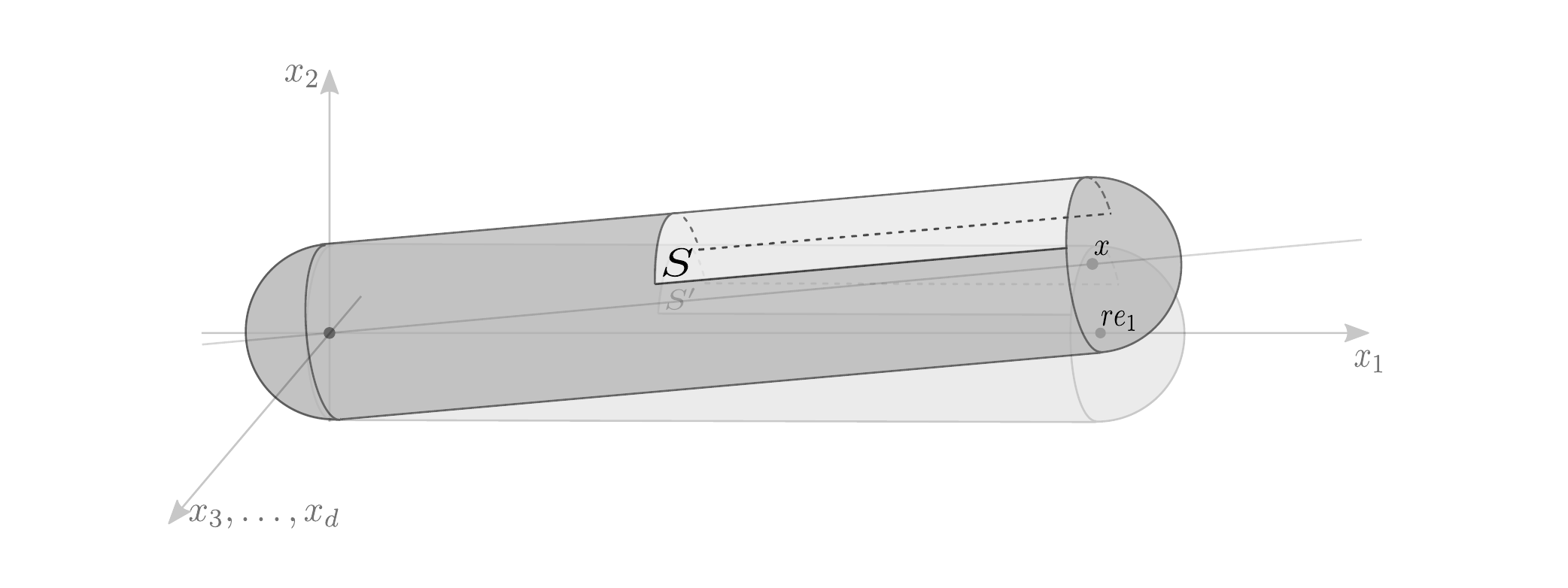}}
\caption{$S$ is the rotation of the set $S' = \{y\in \partial \ell_{re_1}(\rho)\,:\,\frac12r\leq y_1\leq r, y_2\geq \frac12\rho\}$ by angle $\varphi$ parallel to the coordinate $x_1x_2$-plane.}
\label{fig:S}
\end{figure}

Let $S' = \{y\in \partial \ell_{re_1}(\rho)\,:\,\frac12r\leq y_1\leq r, y_2\geq \frac12\rho\}$ and define $S$ as the rotation of $S'$ by angle $\varphi$ parallel to the coordinate plane $\R^2\times\{0\}^{d-2}$:
\[
S = \big\{\big(y_1\cos\varphi - y_2\sin\varphi, y_1\sin\varphi + y_2\cos\varphi, y_3, \ldots, y_d\big)\in\R^d\,:\, y\in S'\big\},
\]
see Figure~\ref{fig:S}. Notice that $S\subseteq \partial \ell_x(\rho)$. Furthermore, $d(S,\ell_{re_1}(\rho)) \geq c\|x-re_1\|$, for some $c=c(d)$ and all $r\geq r_0(d,\rho)$. Indeed, it suffices to show that $d(S,\R e_1)\geq \rho + c\|x-re_1\|$. Since $\|x-re_1\| = 2r\sin\frac\varphi2$, it suffices to find a $c=c(d)$ such that 
\[
(y_1\sin\varphi + y_2\cos\varphi)^2 + \sum\limits_{i=3}^d y_i^2 \geq (\rho + 2rc\sin\frac\varphi2)^2,\quad\text{for all }y\in S'.
\]
Using the fact that $\sum_{i=2}^d y_i^2 = \rho^2$, the inequality follows if 
\[
(y_1^2 - y_2^2)\sin^2\varphi \geq 4c^2r^2\sin^2\frac\varphi2\quad \text{and} \quad 
y_1y_2\sin\varphi\cos\varphi \geq 2\rho r c \sin\frac\varphi2. 
\]
Since $y_1\geq \frac12r$, $\frac12\rho\leq y_2\leq \rho$ and $\cos\varphi\geq \frac1{\sqrt2}$, the above two inequalities hold for some $c=c(d)$ if $r\geq 4\rho$. 
We conclude from $d(S,\ell_{re_1}(\rho)) \geq c\|x-re_1\|$ and Lemma~\ref{l:BI-lowerbound-nonhitting} that $S$ satisfies the second inequality in \eqref{eq:BI-lowerbound-main}.

\smallskip

It remains to show that $S$ also satisfies the first inequality in \eqref{eq:BI-lowerbound-main}. 
Note that $\ell_{re_1}(\rho)\cup\ell_x(\rho)\subset \ell_{re_1}(2\Delta)$. Consider $\overline S = \{y\in\partial \ell_{re_1}(2\Delta)\,:\,y_1\geq \frac12r\}$. Notice that  
\[
\inf\limits_{y\in \overline S}P_y\big[H_S<H_{\ell_{re_1}(\rho)}\big]\geq c=c(d,\rho)>0.
\]
Indeed, for every $y\in \overline S$, there exists a polygonal path $\gamma$ from $y$ to some $z=z(y)\in\overline S$ with $\frac12r + \Delta\leq z_1 \leq r - \Delta$, $z_2 = 2\Delta$ and $z_3=\ldots=z_d=0$, which consists of at most $2d$ parallel to coordinate axes line segments of length at most $4 \Delta$ each, and so that $\gamma\cap\ell_{re_1}(2\Delta) = \{y,z\}$. Let $z' = (z_1,0,\ldots,0)$ and consider the corridor $T=T(y) = B(\gamma,\frac12\rho)\cup B([z,z'],\frac12\rho)$ from $y$ to $z'$ of radius $\frac12\rho$. Notice that any continuous curve in $T$ from $y$ to $z'$ intersects $S$ before $\ell_{re_1}(\rho)$. Furthermore, the probability that a Brownian motion started in $y$ hits $B(z',\frac12\rho)$ before leaving $T$ is bigger than $c=c(d,\rho)>0$ for all $y\in\overline S$, regardless of the choice of $\gamma$ as above. Hence, $P_y\big[H_S<H_{\ell_{re_1}(\rho)}\big]\geq c$. 

Next, by symmetry, 
\[
\mathsf P_{\mu_R}\big[H_{\overline S}=H_{\ell_{re_1}(2\Delta)}<\infty]
= \frac12 \mathsf P_{\mu_R}\big[H_{\ell_{re_1}(2\Delta)}<\infty]
= \frac12\mathrm{cap}\big(\ell_{re_1}(2\Delta)\big) \stackrel{\eqref{eq:capacity-cylinder}}\geq c\varphi(r),
\]
for some $c=c(d,\rho)>0$. Putting the two bounds together, we obtain that 
\[
\mathsf P_{\mu_R}\big[H_S<H_{\ell_{re_1}(\rho)}\big]\geq \mathsf P_{\mu_R}\big[H_{\overline S}=H_{\ell_{re_1}(2\Delta)}<\infty]\,\inf\limits_{y\in \overline S}P_y\big[H_S<H_{\ell_{re_1}(\rho)}\big]\geq c\varphi(r),
\]
for some $c=c(d,\rho)>0$, which is the first inequality in \eqref{eq:BI-lowerbound-main}. 

\smallskip

The proof of \eqref{eq:BI-lowerbound-main} is completed for all $x\in\partial B(r)$. \qed

\subsection{Proof of the upper bound of \eqref{intro:visibility-bounds}}\label{sec:BI-upper}

Fix $\alpha>0$ and $\rho>0$. 
Here, we prove that Brownian interlacements $\mathcal I^\alpha_\rho$ satisfies the upper bound of \eqref{intro:visibility-bounds} with $\delta(r)$ as in part (1) of Theorem~\ref{intro:visibility-mainresult}. 
By Proposition~\ref{prop:visibility-general}, it suffices to show that condition \eqref{eq:visibility-general-upper} holds with $\delta=D_1(d,\rho)\delta(r)$ and $D=D_2(d,\alpha,\rho)$. We choose $D_1$ such that $\delta<\rho$ for all $r\geq 2$. 

If $\partial B(r)\cap B(re_1,\delta)$ is visible from $0$ through the vacant set of $\mathcal I^\alpha_\rho$, then the line segment $\ell_{re_1}$ is visible through the vacant set of $\mathcal I^\alpha_{\rho-\delta}$. Thus, it suffices to prove that there exists $D=D(d,\alpha,\rho)$ such that for all $r\geq 2$, 
\[
\mathsf P^\alpha_{\rho-\delta}[0\stackrel{L}\longleftrightarrow re_1]\leq D\mathsf P^\alpha_\rho[0\stackrel{L}\longleftrightarrow re_1].
\]
By \eqref{eq:BI-capacity-rho}, this is equivalent to 
\begin{equation}\label{eq:visibility-BI-upper-main}
\mathrm{cap}(\ell_{re_1}(\rho))- \mathrm{cap}(\ell_{re_1}(\rho-\delta)) \leq C,
\end{equation}
for some $C=C(d,\rho)$ and $r\geq r_0(d,\rho)$ (large enough). 
By the definition of capacity \eqref{def:capacity}, for any $R>r+\rho$,  
\begin{equation}\label{eq:visibility-BI-upper-integral}
\mathrm{cap}(\ell_{re_1}(\rho))- \mathrm{cap}(\ell_{re_1}(\rho-\delta)) 
= \mathsf P_{\mu_R}\big[H_{\ell_{re_1}(\rho)}<\infty,\,H_{\ell_{re_1}(\rho-\delta)}=\infty\big].
\end{equation}

\smallskip

If $d\geq 4$, then by the strong Markov property and \eqref{def:capacity},
the right hand side in \eqref{eq:visibility-BI-upper-integral} is bounded from above by 
\[
\mathrm{cap}(\ell_{re_1}(\rho))\,\sup\limits_{y\in\partial \ell_{re_1}(\rho)}\mathsf P_y[H_{\ell_{re_1}(\rho-\delta)}=\infty]
\stackrel{\eqref{eq:BM-escape},\eqref{eq:capacity-cylinder}}\leq 
Cr\delta = CD_1,
\]
for some $C=C(d,\rho)$, where we estimated the probability to avoid the cylinder $\ell_{re_1}(\rho-\delta)$ by the probability to avoid a ball of radius $(\rho-\delta)$ at distance $\delta$ from $y$ contained in $\ell_{re_1}(\rho-\delta)$. Hence \eqref{eq:visibility-BI-upper-main} holds. 

\smallskip

It remains to prove \eqref{eq:visibility-BI-upper-main} for $d=3$. 
In this case, the probability that Brownian motion from $y\in\partial \ell_{re_1}(\rho)$ avoids $\ell_{re_1}(\rho-\delta)$ scales differently for different $y$'s, depending on how close $y$ is to one of the ends of the cylinder. 
Define the line segments 
\[
\ell_l=[0,r^{\frac23}e_1],\quad 
\ell_c=[r^{\frac23}e_1,re_1-r^{\frac23}e_1],
\quad
\ell_r=[re_1-r^{\frac23}e_1,re_1],
\]
and let $\ell_j(\rho) = B(\ell_j,\rho)$, for $j\in\{l,c,r\}$, be the respective cylinders of radius $\rho$. 
By the strong Markov property and \eqref{def:capacity}, the integral in \eqref{eq:visibility-BI-upper-integral} is bounded from above by 
\begin{equation}\label{eq:visibility-BI-upper-lcr}
\sum\limits_{j\in\{l,c,r\}}\mathrm{cap}(\ell_j(\rho))\,\sup\limits_{y\in\partial \ell_j(\rho)}\mathsf P_y[H_{\ell_{re_1}(\rho-\delta)}=\infty].
\end{equation}
In the remainder of the proof, we denote by $c$ and $C$ positive constants that only depend on $\rho$ and whose value may change from place to place. 
By \eqref{eq:capacity-cylinder}, $\mathrm{cap}(\ell_l(\rho))=\mathrm{cap}(\ell_r(\rho))\leq Cr^{\frac23}$ and $\mathrm{cap}(\ell_c(\rho))\leq C\frac{r}{\log r}$.

When $j\in\{l,r\}$, we estimate the probability to avoid the cylinder $\ell_{re_1}(\rho-\delta)$ by the probability to avoid a ball of radius $(\rho-\delta)$ at distance $\delta$ from $y$, just as in the previous case of $d\geq 4$. Thus, by \eqref{eq:BM-escape}, 
\[
\sup\limits_{y\in\partial \ell_l(\rho)}\mathsf P_y[H_{\ell_{re_1}(\rho-\delta)}=\infty]
=
\sup\limits_{y\in\partial \ell_r(\rho)}\mathsf P_y[H_{\ell_{re_1}(\rho-\delta)}=\infty]\leq C\delta.
\]
Now, if a Brownian motion started in $y\in\partial \ell_c(\rho)$ aviods $\ell_{re_1}(\rho-\delta)$, then either (a) it exits from the doubly infinite cylinder $B(\R e_1,r^{\frac13})$ before hitting the infinite cylinder $B(\R e_1,\rho-\delta)$ or (b) it exits from the ball $B(y', r^{\frac23})$ before it exits from the infinite cylinder $B(\R e_1,r^{\frac13})$; here $y'$ is the closest point of $\ell_c$ to $y$. The first possibility has probability at most $C\frac{\delta}{\log r}$ by \eqref{eq:BM-hitting} and the fact that the projection of Brownian motion on the hyperplane $\{z\in\R^3\,:\,z_1=0\}$ is a $2$-dimensional Brownian motion. The second possibility has probability at most $(1-c)^{\lfloor\frac14r^{\frac13}\rfloor-1}$, since for any $z\in B(\R e_1,r^{\frac13})$, the Brownian motion from $z$ exits from the cylinder $B(\R e_1,r^{\frac13})$ before leaving the ball $B(z,4r^{\frac13})$ with probability $\geq c$ uniformly in $z$ and $r$. Thus, if it exits from the ball $B(y', r^{\frac23})$ before leaving the cylinder $B(\R e_1,r^{\frac13})$,
then the above event should not occur at least $\lfloor\frac14r^{\frac13}\rfloor-1$ times at the first exit times from the balls $B(y',4r^{\frac13}k)$, $1\leq k\leq \lfloor\frac14r^{\frac13}\rfloor-1$. Hence 
\[
\sup\limits_{y\in\partial \ell_c(\rho)}\mathsf P_y[H_{\ell_{re_1}(\rho-\delta)}=\infty]\leq C\tfrac{\delta}{\log r} + (1-c)^{\lfloor\frac14r^{\frac13}\rfloor-1}\,.
\]
Recall that $\delta=D_1\delta(r) = D_1\frac{\log^2r}{r}$. Thus, the sum in \eqref{eq:visibility-BI-upper-lcr} is bounded from above by $C=C(\rho)$, which proves \eqref{eq:visibility-BI-upper-main}. \qed

\section{Poisson cylinders}\label{sec:PC}

Let $SO_d$ be the topological group of rigid rotations of $\R^d$. We denote by $\nu$ the unique Haar measure on $SO_d$ such that $\nu(SO_d)=1$. For $K\subset\R^d$ and $\phi\in SO_d$, we denote by $K_\phi$ the rotation of $K$ by $\phi$. 

Let $H$ be the hyperplane $\{x=(x_1,\ldots, x_d)\in\R^d\,:\,x_1=0\}$ and $\pi$ the orthogonal projection on $H$. Denote by $\lambda_k$ the $k$-dimensional Lebesgue measure. 

\smallskip

For $\alpha>0$, let $\mathcal L^\alpha$ be a random closed subset of $\R^d$ ($d\geq 2$), whose law is characterized by the relations 
\begin{equation}\label{eq:PC-mu}
\mathsf P \big[\mathcal L^\alpha\cap K = \emptyset\big] = e^{-\alpha\mu(K)},\quad\text{for compact }K\subset\R^d,
\end{equation}
where 
\[
\mu(K) = \int\limits_{SO_d}\lambda_{d-1}\big(\pi(K_\phi)\big)\,\nu(d\phi)
\]
(see e.g.\ \cite[(2.8)]{TW-cylinders}). 
The \emph{Poisson cylinders model at level $\alpha$ with radius $\rho$} is the closed random subset $\mathcal L^\alpha_\rho$ of $\R^d$, defined as 
\[
\mathcal L^\alpha_\rho = B(\mathcal L^\alpha,\rho) = \bigcup\limits_{x\in\mathcal L^\alpha} B(x,\rho).
\]
By \eqref{eq:PC-mu}, the law of $\mathcal L^\alpha_\rho$, denoted by $\mathsf P^\alpha_\rho$, is characterized by the relations
\begin{equation}\label{eq:PC-mu-rho}
\mathsf P \big[\mathcal L^\alpha_\rho\cap K = \emptyset\big] = e^{-\alpha\mu(B(K,\rho))},\quad\text{for compact }K\subset\R^d.
\end{equation}

\smallskip

In Section~\ref{sec:PC-auxiliary}, we collect some geometric properties of the volume of cylinders and of orthogonal projections. We prove part (2) of Theorem~\ref{intro:visibility-mainresult} in Sections~\ref{sec:PC-upper} (upper bound of \eqref{intro:visibility-bounds}) and \ref{sec:PC-lower} (lower bound of \eqref{intro:visibility-bounds}).

\subsection{Auxiliary results}\label{sec:PC-auxiliary}

Recall that $\ell_x(\rho) = B(\ell_x,\rho)$ is the closed $\rho$-neighborhood of the line segment $\ell_x=[0,x]$, which we call ($d$-dimensional) cylinder around $[0,x]$. 
In Lemma~\ref{l:mu-segment-PC} we compute the volume of $\ell_x(\rho)$; in Lemma~\ref{l:volume-symmetric-difference}, we provide a bound on the volume of the symmetric difference of two cylinders $\ell_x(\rho)\Delta\ell_y(\rho)$; in Lemma~\ref{l:vector-projections}, we show that any two vectors in $\R^d$ can be orthogonally projected onto one of the $d$ coordinate hyperplanes, so that their norms and the angle between them are not too small compared to the original ones.

\begin{lemma}\label{l:mu-segment-PC}
Let $r,\rho>0$. 
\begin{itemize}
\item
If $d=1$, then $\lambda_d(\ell_x(\rho)) = |x|+2\rho$. 
\item
If $d\geq 2$, then $\lambda_d(\ell_x(\rho)) = \kappa_{d-1}\rho^{d-1}\|x\| + \kappa_d\rho^d$,
where $\kappa_s$ is the volume of the $s$-dimensional Euclidean unit ball.
\end{itemize}
\end{lemma}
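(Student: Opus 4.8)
The plan is to compute the volume of the $\rho$-neighborhood $\ell_x(\rho) = B(\ell_x,\rho)$ of the segment $\ell_x = [0,x]$ directly from elementary geometry, treating the case $x=0$ (where $\ell_x(\rho)=B(0,\rho)$ has volume $\kappa_d\rho^d$, consistent with the stated formula) and assuming $\|x\|>0$ otherwise. The key geometric observation is that $\ell_x(\rho)$ decomposes, up to a null set, into two disjoint pieces: a straight cylindrical tube (the set of points whose nearest point on the line through $0$ and $x$ lies strictly inside the segment) and two half-balls capping the ends at $0$ and at $x$.

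First I would set up coordinates so that $x = \|x\| e_1$; by rotational invariance of Lebesgue measure this costs nothing. Then a point $y\in\R^d$ lies in $\ell_x(\rho)$ iff $d(y,\ell_x)\le\rho$, and one checks that the nearest point of $\ell_x$ to $y$ is: the endpoint $0$ if $y_1\le 0$, the endpoint $x$ if $y_1\ge\|x\|$, and the foot of the perpendicular $(y_1,0,\dots,0)$ if $0\le y_1\le\|x\|$. This partitions $\ell_x(\rho)$ (up to the two hyperplanes $y_1=0$ and $y_1=\|x\|$, which have measure zero) into: the cylinder $C = \{y: 0\le y_1\le\|x\|,\ \sum_{i\ge2}y_i^2\le\rho^2\}$, the half-ball $\{y: y_1\le 0,\ \|y\|\le\rho\}$, and the half-ball $\{y: y_1\ge\|x\|,\ \|y-x\|\le\rho\}$. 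These three sets are pairwise disjoint (in their interiors).

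Next I would compute the three volumes. The cylinder $C$ has volume $\|x\|\cdot\kappa_{d-1}\rho^{d-1}$, since each slice $\{y_1 = t\}$, $t\in[0,\|x\|]$, is a $(d-1)$-dimensional ball of radius $\rho$ and $(d-1)$-volume $\kappa_{d-1}\rho^{d-1}$. Each of the two half-balls has volume $\tfrac12\kappa_d\rho^d$. Summing gives $\lambda_d(\ell_x(\rho)) = \kappa_{d-1}\rho^{d-1}\|x\| + \kappa_d\rho^d$, as claimed, for $d\ge2$. For $d=1$ the same decomposition gives the interval $[0,|x|]$ of length $|x|$ plus two half-intervals of length $\rho$ each, hence $|x|+2\rho$; here $\kappa_0 = 1$ and $\kappa_1 = 2$, so the formula is formally the same but is stated separately because $\kappa_0\rho^0 = 1$ would need interpretation.

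I do not expect any genuine obstacle here; this is a routine exercise. The only mild care needed is to justify that the overlaps between the cylinder and the two spherical caps are Lebesgue-null (they are subsets of hyperplanes), so that the volumes simply add, and to handle the degenerate case $x=0$ separately. Everything else is Fubini on the slices of the straight cylinder.
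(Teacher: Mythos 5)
Your argument is correct and is precisely the elementary decomposition (straight tube plus two half-balls at the ends) that the paper omits, labeling the lemma ``immediate from the definition.'' You have simply written out the routine computation the authors took for granted, with the same ingredients and no new ideas or detours.
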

\begin{proof}
Immediate from the definition of $\ell_x(\rho)$. 
\end{proof}

\begin{lemma}\label{l:volume-symmetric-difference}
Let $\rho>0$. 
\begin{itemize}
\item
If $d=1$, then for all $x,y\in \R^d$, $\lambda_d\big(\ell_x(\rho)\Delta\ell_y(\rho)\big) = |x-y|$.
\item
If $d\geq 2$, then there exists $c_3=c_3(d)>0$, such that for all $x,y\in \R^d$ with $r = \min(\|x\|,\|y\|)>8\rho$, 
\[
\lambda_d\big(\ell_x(\rho)\Delta \ell_y(\rho)\big) \geq c_3 r \rho^{d-2} \min\big(r\sin\frac\varphi2,\rho\big),
\]
where $\varphi\in[0,\pi]$ is the angle between $x$ and $y$.
\end{itemize}
\end{lemma}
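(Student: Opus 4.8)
\textbf{Proof proposal for Lemma~\ref{l:volume-symmetric-difference}.}

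The $d=1$ case is immediate from Lemma~\ref{l:mu-segment-PC} and the explicit description of the sets, so the plan is to focus on $d\geq 2$. The key observation is that $\ell_x(\rho)\Delta\ell_y(\rho)$ contains a suitable ``wedge'' of volume comparable to $r\rho^{d-2}\min(r\sin\frac\varphi2,\rho)$, and it suffices to exhibit one such region inside, say, $\ell_y(\rho)\setminus\ell_x(\rho)$. The plan is to reduce to a two-dimensional picture: since both segments start at the origin and span at most a $2$-plane, write $\Pi$ for the plane containing $\ell_x$ and $\ell_y$ (or any plane containing them if $\varphi\in\{0,\pi\}$), so that $\ell_x(\rho)$ and $\ell_y(\rho)$ are ``tubes'' whose cross-sections perpendicular to $\Pi$ are $(d-2)$-dimensional balls of radius comparable to $\rho$ (by Pythagoras, the slice at signed distance $t$ from $\Pi$ has radius $\sqrt{\rho^2-t^2}$). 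Integrating over $|t|\le\rho/2$ contributes a factor $\asymp\rho^{d-2}$, reducing the estimate to a planar claim: the area of $\ell_y(\rho/2)\setminus\ell_x(\rho)$ within $\Pi$ is $\gtrsim r\min(r\sin\frac\varphi2,\rho)$, where now $\ell_x,\ell_y$ are genuine segments in the plane of common length/near-equal length $\ge r$ and opening angle $\varphi$.

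For the planar claim, I would distinguish two regimes according to the term achieving the minimum. If $r\sin\frac\varphi2\le\rho$ (i.e.\ the segments are nearly parallel), look near the far endpoint $y$ of $\ell_y$: the endpoint of the longer segment lies at distance comparable to $r\sin\frac\varphi2$ from the line through $\ell_x$ but, because $r>8\rho$ and $\|x\|,\|y\|\ge r$, the foot of the perpendicular from points near $y$ onto $\ell_x$ stays inside the segment $\ell_x$ only up to a bounded correction — one checks that a strip of $\ell_y(\rho/2)$ of length $\asymp r$ near the far end has all its points at distance $>\rho$ from $\ell_x$ once $r\sin\frac\varphi2$ is of order $\rho$, and more generally a parallelogram-shaped region of dimensions $\asymp r$ by $\asymp r\sin\frac\varphi2$ lies in $\ell_y(\rho/2)\setminus\ell_x(\rho)$, giving area $\gtrsim r\cdot r\sin\frac\varphi2$. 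If instead $r\sin\frac\varphi2>\rho$ (the angle is bounded below, $\varphi\gtrsim\rho/r$), then for all $s$ in an interval of length $\asymp r$ the point $s\,\hat y$ on $\ell_y$ is at distance $\ge s\sin\varphi\gtrsim \rho$ (using $s\gtrsim r$ and $\sin\varphi\asymp\sin\frac\varphi2$ since $\varphi\le\pi$) from the \emph{line} through $\ell_x$, hence at distance $\ge\rho$ from $\ell_x$ itself once we also control the endpoint region; thus a region of $\ell_y(\rho/2)$ of length $\asymp r$ and width $\asymp\rho$ avoids $\ell_x(\rho)$, giving area $\gtrsim r\rho$. In both cases the planar area is $\gtrsim r\min(r\sin\frac\varphi2,\rho)$, and multiplying by the $\rho^{d-2}$ from the transverse integration yields the claim.

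The main obstacle is the bookkeeping near the two endpoints of the segments: ``distance to the line through $\ell_x$'' and ``distance to the segment $\ell_x$'' differ precisely in the slab beyond the endpoints, and one must be careful that the region chosen inside $\ell_y(\rho/2)$ actually stays away from $\ell_x(\rho)$ including the spherical caps $B(0,\rho)$ and $B(x,\rho)$ capping the cylinder $\ell_x(\rho)$. This is handled by the hypothesis $r>8\rho$: it guarantees there is a ``middle portion'' of $\ell_y$ of length comparable to $r$ — say the points $s\hat y$ with $2\rho\le s\le r-2\rho$ (recall $\|y\|\ge r$) — whose nearest point on the \emph{segment} $\ell_x$ is an interior point, so the segment/line distinction is irrelevant there, and on which the transverse-distance lower bounds above are valid. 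Tracking the implicit constants through the reduction is routine, so the only genuine content is the elementary planar area estimate in the two regimes.
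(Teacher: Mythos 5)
Your $d=1$ case and the large-angle regime (paper's Case $r\sin\tfrac\varphi2>4\rho$) are fine and close to what the paper does. The problem is the small-angle regime, and it is fatal to the proposed reduction.

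After the standard reduction to $\|x\|=\|y\|=r$ (which the paper performs and which you would also need, since in general $\lambda_d(\ell_x(\rho)\Delta\ell_y(\rho))\geq\lambda_d(\ell_{\tilde y}(\rho)\setminus\ell_x(\rho))$ with $\tilde y=\tfrac{\|x\|}{\|y\|}y$), your planar target quantity $\lambda_2\big(\ell_y(\rho/2)\setminus\ell_x(\rho)\big)$ is identically zero whenever $2r\sin\tfrac\varphi2<\rho/2$. Indeed, every point $s\hat y$ of $\ell_y$ with $0\leq s\leq r$ is at distance at most $2s\sin\tfrac\varphi2\leq 2r\sin\tfrac\varphi2<\rho/2$ from $\ell_x$, so $\ell_y\subset\ell_x(\rho/2)$ and hence $\ell_y(\rho/2)\subset\ell_x(\rho)$. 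The ``parallelogram of dimensions $\asymp r$ by $\asymp r\sin\tfrac\varphi2$ inside $\ell_y(\rho/2)\setminus\ell_x(\rho)$'' therefore does not exist, yet the lemma demands a positive lower bound $\asymp r^2\sin\tfrac\varphi2$. The obstruction is structural: when you slice perpendicular to the plane $\Pi$ at transverse distance $t$, the cross-section of $\ell_y(\rho)\setminus\ell_x(\rho)$ is $\ell_y(\rho')\setminus\ell_x(\rho')$ with equal radii $\rho'=\sqrt{\rho^2-t^2}$ on both tubes. Replacing this by $\ell_y(\rho/2)\setminus\ell_x(\rho)$ (shrinking one radius while keeping the other) discards exactly the thin crescent near the tube boundary where all of the symmetric difference lives once the angle is small, and the crescent is not a parallelogram — it has no uniform width, only an average width of order $r\sin\tfrac\varphi2$.

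A correct version of your slicing idea would have to estimate $\lambda_2\big(\ell_y(\rho')\setminus\ell_x(\rho')\big)$ with equal radii, which is a genuine (if elementary) crescent-area computation, and then integrate over $t$. The paper instead slices along the hyperplanes $H_a=\{z_1=a\}$: there the cross-sections are two congruent $(d-1)$-dimensional balls $B_{x,a},B_{y,a}$ of radius $\rho$ with centers displaced by $\delta=a\tan\varphi$, and the paper exhibits a spherical cone $S_a\subset B_{y,a}\setminus B_{x,a}$ whose volume is $\gtrsim\rho^{d-2}\min(\delta,\rho)$; integrating in $a$ then yields the stated bound. That crescent/cone estimate is precisely the piece your argument is missing: the bookkeeping you correctly flag as ``the main obstacle'' (endpoint slabs, $r>8\rho$) is not in fact where the difficulty lies.
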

\begin{proof}
The statement for $d=1$ is immediate by considering separately the cases $x,y>0$ and $x<0<y$. 

\smallskip

Let $d\geq 2$. It suffices to prove that for all $x,y\in\R^d$ with $\|x\|=\|y\|=r>8\rho$, 
\begin{equation}\label{eq:volume-equal-norms}
\lambda_d\big(\ell_y(\rho)\setminus \ell_x(\rho)\big) \geq c_3 r \rho^{d-2} \min\big(r\sin\frac\varphi2,\rho\big).
\end{equation}
Indeed, if $x,y\in\R^d$ are abritrary with $\|x\|\leq \|y\|$, then 
\[
\lambda_d\big(\ell_x(\rho)\Delta \ell_y(\rho)\big)\geq \lambda_d\big(\ell_y(\rho)\setminus \ell_x(\rho)\big)\geq 
\lambda_d\big(\ell_{\widetilde y}(\rho)\setminus \ell_x(\rho)\big),
\]
where $\widetilde y = \frac{\|x\|}{\|y\|}y$, and the desired inequality follows from \eqref{eq:volume-equal-norms} applied to $x$ and $\widetilde y$. 

\smallskip

To prove \eqref{eq:volume-equal-norms}, we may assume, without loss of generality, that $x=(r,0,\ldots,0)$ and $y=(r\cos\varphi, r\sin\varphi, 0,\ldots, 0)$, for some $\varphi\in[0,\pi]$. 
If $r\sin\frac\varphi2> 4\rho$, then $d(\frac12y,\R_+e_1)\geq \frac12 r\sin\frac\varphi2> 2\rho$. Hence, in this case, the $\rho$-neighborhood of the line segment $[\frac12 y,y]$ is disjoint from $\ell_x(\rho)$, which implies that 
\[
\lambda_d\big(\ell_y(\rho)\setminus \ell_x(\rho)\big) \geq \lambda_d\big(\ell_{\frac12y}(\rho)\big) 
\stackrel{(\mathrm{L. }\ref{l:mu-segment-PC})}=\frac12\kappa_{d-1}\rho^{d-1} r + \kappa_d \rho^d
\geq \frac12\kappa_{d-1} r \rho^{d-2} \min\big(r\sin\frac\varphi2,\rho\big).
\]
\begin{figure}[!tp]
\centering
\resizebox{15cm}{!}{\includegraphics{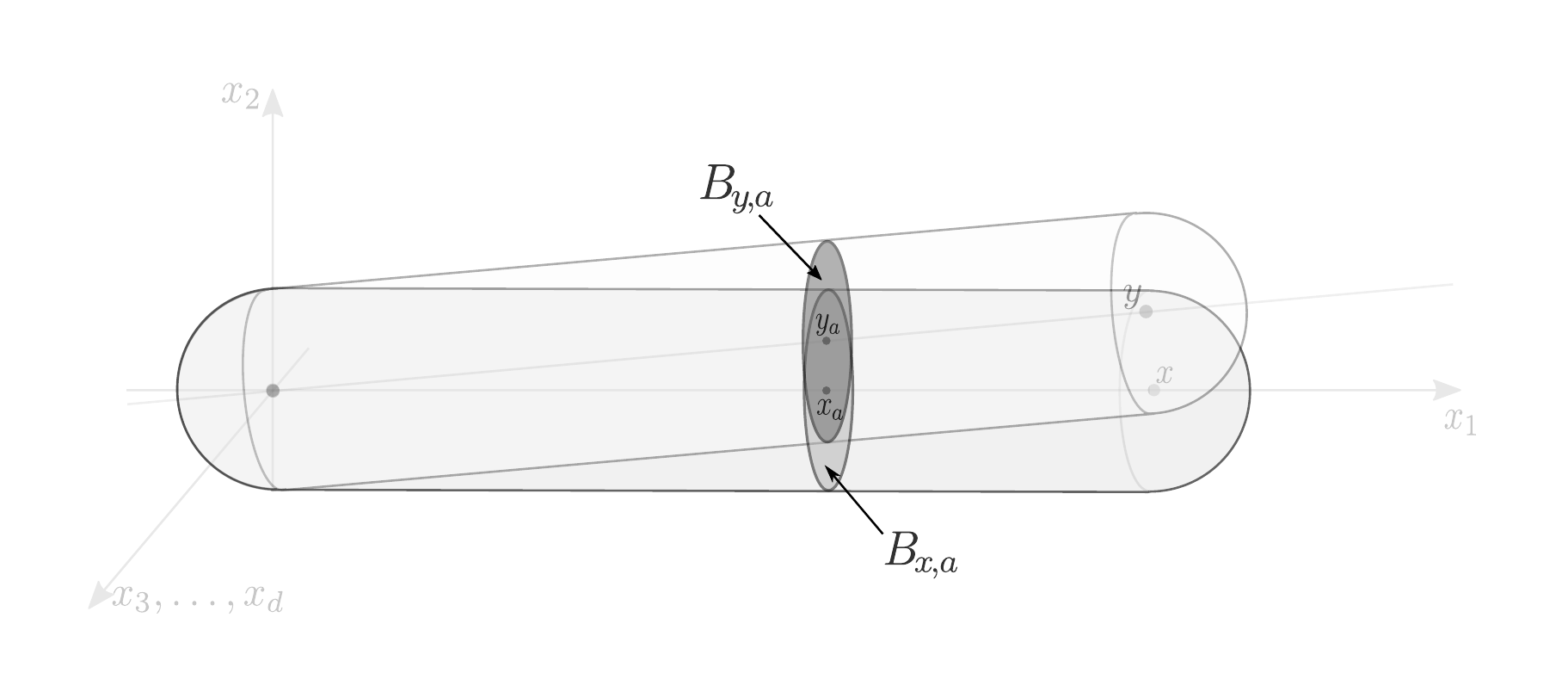}}
\caption{For $x=(r,0,\ldots, 0)$, $y=(r\cos\varphi,r\sin\varphi,0,\ldots, 0)$ and $a\in[0,r\cos\varphi]$, cylinder $\ell_x(\rho)$ intersects hyperplane $H_a = \{z\in\R^d:z_1=a\}$ on $(d-1)$-dimensional ball $B_{x,a}$ of radius $\rho$ and center at $x_a=(a,0,\ldots,0)$, cylinder $\ell_y(\rho)$ intersects $H_a$ on a subset (generally ellipsoid), which contains $(d-1)$-dimensional ball $B_{y,a}$ of radius $\rho$ and center $y = (a,a\tan\varphi,0,\ldots,0)$.}
\label{fig:Ha}
\end{figure}
It remains to consider the case $r\sin\frac\varphi2\leq 4\rho$. Since $r>8\rho$, we have $\sin\frac\varphi2\leq \frac12$, which implies that $\varphi\leq \frac\pi3$. 
Note that the cylinder $\ell_y(\rho)$ intersects any hyperplane $H_a = \{z\in\R^d\,:\,z_1=a\}$, $a\in[0,r\cos\varphi]$, on a subset (generally ellipsoid), which contains the $(d-1)$-dimensional ball $B_{y,a}$ in $H_a$ of radius $\rho$ and center at $y_a = (a,a\tan\varphi,0,\ldots, 0)\in\ell_y$, and the cylinder $\ell_x(\rho)$ intersects $H_a$ on the $(d-1)$-dimensional ball $B_{x,a}$ in $H_a$ of radius $\rho$ and center at $x_a=(a,0,\ldots, 0)$, see Figure~\ref{fig:Ha}. Thus, for each $a\in[0,r\cos\varphi]$, 
\[
\lambda_{d-1}\big((\ell_y(\rho)\setminus \ell_x(\rho))\cap H_a\big) \geq \lambda_{d-1}\big(B_{y,a}\setminus B_{x,a}\big).
\]
Let $\delta = a\tan\varphi$ be the distance between the centers of the balls and define $\theta=\frac\pi4$ (any other choice of $\theta\in (0,\frac\pi2)$ would also do). Let $S_a$ be the subset of the spherical cone of the ball $B_{y,a}$ in the direction of the second coordinate axis and with opening angle $\theta$, whose points are at distance at least $\rho-\delta\cos\theta$ from the center of the ball:
\[
S_a = \big\{z\in B_{y,a}\,:\,z_2 \geq \|z-y_a\|\cos\theta\text{ and }\|z-y_a\|\geq \rho-\delta\cos\theta\big\},
\]
see Figure~\ref{fig:Sa}. 
\begin{figure}[!tp]
\centering
\resizebox{15cm}{!}{\includegraphics{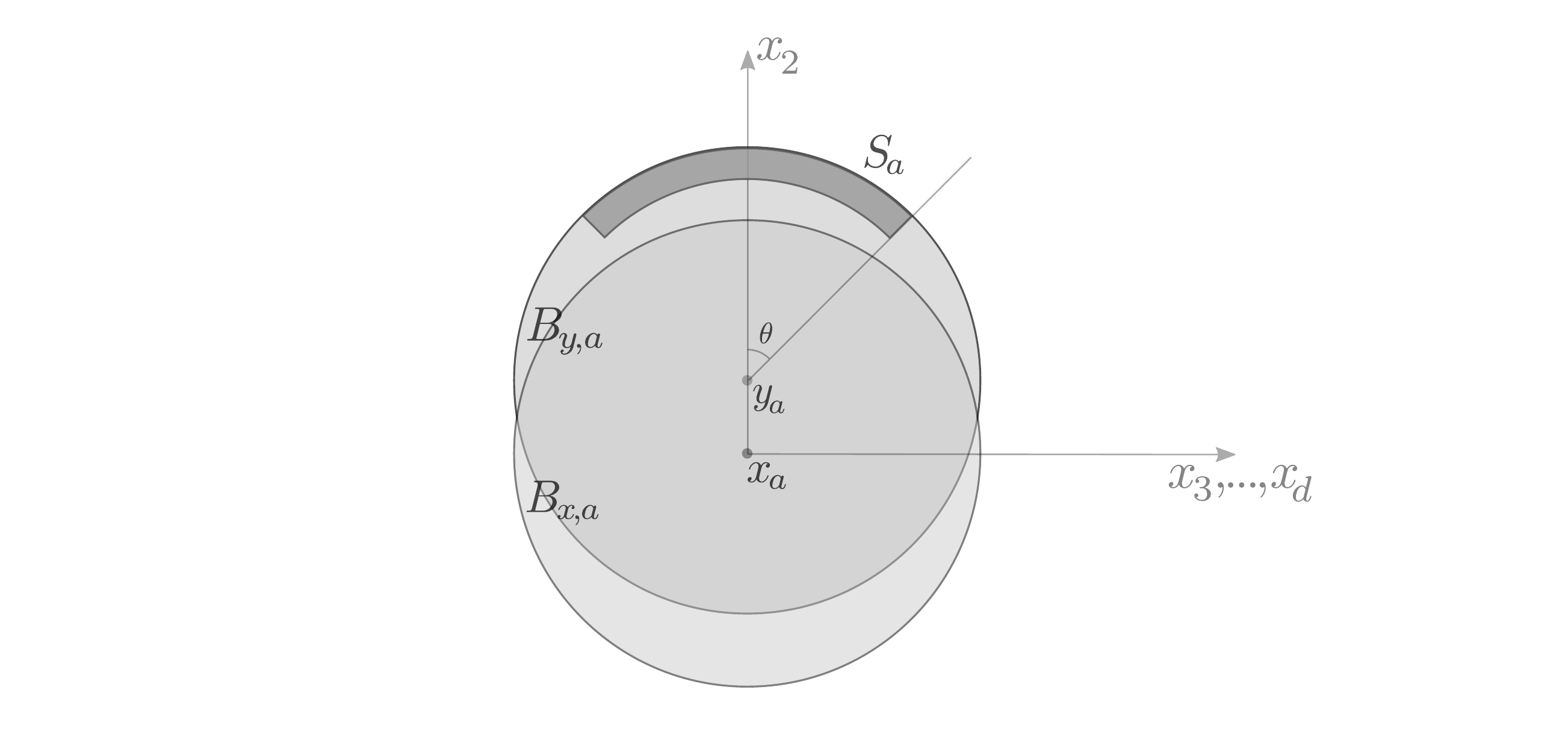}}
\caption{Set $S_a = \big\{z\in B_{y,a}\,:\,z_2 \geq \|z-y_a\|\cos\theta\text{ and }\|z-y_a\|\geq \rho-\delta\cos\theta\big\}$.}
\label{fig:Sa}
\end{figure}
Note that $S_a$ is disjoint from the ball $B_{x,a}$. Indeed, if $\delta\geq\rho$, then it is trivial, if $\delta<\rho$, then for any $z\in S_a$, by the law of cosines applied to the triangle $x_a$, $y_a$ and $z$, we obtain that 
\begin{eqnarray*}
\|z-x_a\|^2 &\geq &\|z-y_a\|^2 + \|y_a-x_a\|^2 + 2\|z-y_a\|\,\|y_a-x_a\|\cos\theta\\
&\geq &\big(\|z-y_a\| + \|y_a-x_a\|\cos\theta\big)^2>\rho^2.
\end{eqnarray*}
Thus, $\lambda_{d-1}\big(B_{y,a}\setminus B_{x,a}\big)\geq \lambda_{d-1}(S_a)$. If $\delta\geq\rho$, then $\lambda_{d-1}(S_a)\geq c\rho^{d-1}$, for some $c=c(d)>0$. If $\delta<\rho$, then  
\[
\lambda_{d-1}(S_a)\geq c\big(\rho^{d-1} - (\rho - \delta\cos\theta)^{d-1}\big)\geq c \rho^{d-2}\delta\cos\theta,
\]
for some $c=c(d)$. Note that $\delta = a \tan\varphi \geq a\sin\varphi\geq a\sin\frac\varphi2$. Thus, for any $\delta>0$, $\lambda_{d-1}(S_a)\geq c\frac ar \rho^{d-2}\min\big(r\sin\frac\varphi2,\rho\big)$, for some $c=c(d)$. Integration over $a$ now gives
\[
\lambda_d\big(\ell_y(\rho)\setminus \ell_x(\rho)\big)
\geq \int\limits_0^{r\cos\varphi}\lambda_{d-1}(S_a)\,da \geq \frac18 c r\rho^{d-2}\min\big(r\sin\frac\varphi2,\rho\big).
\]
The proof of the lemma is completed. 
\end{proof}

\begin{lemma}\label{l:vector-projections}
Let $d\geq 3$. Let $H_i$ be the hyperplane $\{z=(z_1,\ldots, z_d)\in\R^d\,:\,z_i=0\}$ and $\pi_i$ the orthogonal projection on $H_i$. For any vectors $x,y\in\R^d$ at angle $\varphi\in[0,\pi]$, there exists $i$ such that $\|\pi_i(x)\|\geq \frac1{\sqrt d}\|x\|$, $\|\pi_i(y)\|\geq \frac1{\sqrt d}\|y\|$ and $\sin\varphi_i\geq \frac1{\sqrt d}\sin\varphi$, where $\varphi_i\in[0,\pi]$ is the angle between the vectors $\pi_i(x)$ and $\pi_i(y)$. 
\end{lemma}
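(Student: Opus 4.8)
The plan is to argue by an averaging (pigeonhole) argument over the $d$ coordinate directions, exploiting that the three quantities to be controlled — $\|\pi_i(x)\|^2$, $\|\pi_i(y)\|^2$, and the squared area (or sine) of the parallelogram spanned by $\pi_i(x),\pi_i(y)$ — all have clean sums over $i$. First I would record the trivial identities $\sum_{i=1}^d \|\pi_i(x)\|^2 = (d-1)\|x\|^2$ and $\sum_{i=1}^d \|\pi_i(y)\|^2 = (d-1)\|y\|^2$, coming from $\|\pi_i(v)\|^2 = \|v\|^2 - v_i^2$. For the angle, I would work with the quantity $A_i := \|\pi_i(x)\|^2\|\pi_i(y)\|^2\sin^2\varphi_i = \|\pi_i(x)\|^2\|\pi_i(y)\|^2 - \langle \pi_i(x),\pi_i(y)\rangle^2$, i.e. the squared Gram determinant of the projected pair, and compute $\sum_{i=1}^d A_i$. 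Since $\pi_i$ deletes the $i$-th coordinate, $A_i$ is the sum of squares of the $2\times 2$ minors of the $2\times d$ matrix with rows $x,y$ that do \emph{not} involve column $i$; summing over $i$ counts each such minor $M_{jk}$ (with $j<k$) exactly $d-2$ times, so by the Cauchy–Binet formula $\sum_{i=1}^d A_i = (d-2)\sum_{j<k} M_{jk}^2 = (d-2)\,\|x\|^2\|y\|^2\sin^2\varphi$.

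With these three sums in hand, the idea is to find a single index $i$ that is simultaneously good for all three constraints. I would proceed by a union-bound / counting argument on the "bad" sets: let $B_x = \{i : \|\pi_i(x)\|^2 < \tfrac1d\|x\|^2\}$, and similarly $B_y$ and $B_\varphi = \{i : A_i < \tfrac1{d}\,\|x\|^2\|y\|^2\sin^2\varphi\}$ (the exact thresholds may need mild tuning). From $\sum_i \|\pi_i(x)\|^2 = (d-1)\|x\|^2$ and the fact that each term is at most $\|x\|^2$, a term below $\tfrac1d\|x\|^2$ forces the others to compensate, which bounds $|B_x|$; the same for $|B_y|$; and from $\sum_i A_i = (d-2)\|x\|^2\|y\|^2\sin^2\varphi$ together with $A_i \le \|\pi_i(x)\|^2\|\pi_i(y)\|^2 \le \|x\|^2\|y\|^2\sin^2\varphi$ (the last inequality because projecting cannot increase the parallelogram area — worth a line of justification), one bounds $|B_\varphi|$. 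The claim then follows once $|B_x| + |B_y| + |B_\varphi| < d$, after which any $i$ outside all three bad sets gives $\|\pi_i(x)\| \ge \tfrac1{\sqrt d}\|x\|$, $\|\pi_i(y)\| \ge \tfrac1{\sqrt d}\|y\|$, and $\sin\varphi_i = \sqrt{A_i}/(\|\pi_i(x)\|\|\pi_i(y)\|) \ge \sqrt{A_i}/(\|x\|\|y\|) \ge \tfrac1{\sqrt d}\sin\varphi$.

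The main obstacle I anticipate is \emph{bookkeeping of the constants}: the clean averages give factors $d-1$ and $d-2$, but the stated bound asks for the uniform constant $\tfrac1{\sqrt d}$ on all three quantities, and one must check the three cardinality bounds really do sum to strictly less than $d$ with these specific thresholds. If the naive thresholds are too tight, I would either (i) loosen them — the three quantities are independent enough that one has slack, e.g. replacing $\tfrac1d$ by $\tfrac1{2d}$ in the definition of $B_\varphi$ while keeping $\tfrac1d$ for $B_x,B_y$, then arguing the worst case $d=3$ by hand — or (ii) bypass the pigeonhole entirely: pick $i_0 = \arg\max_i A_i$, which automatically gives $A_{i_0} \ge \tfrac{d-2}{d}\|x\|^2\|y\|^2\sin^2\varphi$, hence both $\|\pi_{i_0}(x)\|^2 \ge \tfrac{d-2}{d}\|x\|^2$ and $\|\pi_{i_0}(y)\|^2 \ge \tfrac{d-2}{d}\|y\|^2$ (since $A_{i_0} \le \|\pi_{i_0}(x)\|^2\|y\|^2\sin^2\varphi$ and symmetrically), and $\sin^2\varphi_{i_0} \ge A_{i_0}/(\|x\|^2\|y\|^2) \ge \tfrac{d-2}{d}\sin^2\varphi$; since $\tfrac{d-2}{d} \ge \tfrac1d$ for $d\ge 3$, this single choice already delivers the lemma, and the averaging sums above are only needed to see that $A_{i_0}$ is this large. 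I expect route (ii) to be the cleanest and would present that, using Cauchy–Binet only to evaluate $\sum_i A_i$.
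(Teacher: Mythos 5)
Your route (ii) is correct and is a genuinely different argument from the paper's. The paper reduces to a single plane $\Pi = \mathrm{span}(x,y)$, picks an orthonormal basis $f^{(3)},\ldots,f^{(d)}$ of $\Pi^\perp$, chooses $i$ so that $|f^{(d)}_i|\geq d^{-1/2}$ (so $i$ depends only on $\Pi$, not on the particular $x,y$), and then compares the $(d-1)$-dimensional volumes of the parallelepiped $P$ spanned by $x,y,f^{(3)},\ldots,f^{(d-1)}$ with those of $\pi_i(P)$: the first is $\|x\|\|y\|\sin\varphi$, the second equals $\|x\|\|y\|\sin\varphi\,|f^{(d)}_i|$ and is at most $\|\pi_i(x)\|\|\pi_i(y)\|\sin\varphi_i$, which gives $\sin\varphi_i\geq d^{-1/2}\sin\varphi$; the norm bounds then follow by specialising to $y\perp x$ with $\|y\|=\|x\|$. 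Your argument instead works entirely with the $2\times 2$ Gram determinants $A_i$, evaluates $\sum_i A_i=(d-2)\|x\|^2\|y\|^2\sin^2\varphi$ via Cauchy--Binet, and takes $i_0=\arg\max_i A_i$, which automatically gives $A_{i_0}\geq\tfrac{d-2}{d}\|x\|^2\|y\|^2\sin^2\varphi$; combining this with $A_{i_0}\leq\|\pi_{i_0}(x)\|^2\|\pi_{i_0}(y)\|^2$ and $A_{i_0}\leq\|\pi_{i_0}(x)\|^2\|y\|^2\sin^2\varphi$ (which you should justify by the projection-contracts-distance argument: $\min_t\|\pi_{i_0}(y-tx)\|\leq\min_t\|y-tx\|$) yields all three bounds at once. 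This bypasses the $(d-1)$-dimensional parallelepiped entirely, is arguably cleaner, and in fact gives the slightly stronger constant $\sqrt{(d-2)/d}$ for $d\geq 4$, although it sacrifices the paper's feature that the same $i$ works for every pair in a fixed plane. Two small points you should address before this is fully airtight: (a) when $\sin\varphi=0$ (i.e., $x,y$ parallel) all $A_i=0$ and the $\arg\max$ gives no information, so you need the one-line separate argument $\sum_i\|\pi_i(x)\|^2=(d-1)\|x\|^2$ to find an $i$ with $\|\pi_i(x)\|^2\geq\|x\|^2/d$ (this suffices since $\|\pi_i(y)\|/\|y\|=\|\pi_i(x)\|/\|x\|$); and (b) when $\sin\varphi>0$ one should observe that $A_{i_0}>0$ forces $\pi_{i_0}(x),\pi_{i_0}(y)\neq 0$, so that $\varphi_{i_0}$ is well defined. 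Your route (i) is plausible but you did not complete the cardinality bounds, so I am evaluating route (ii), which you rightly identified as the one to present.
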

\begin{proof}
It suffices to prove that for any plane $\Pi$ passing through $0$ there exists $i$ such that the statement of the lemma holds for all linearly independent $x,y\in\Pi$. Let $x,y\in\Pi$ be linearly independent and choose an orthonormal basis $f^{(3)},\ldots, f^{(d)}$ of the orthogonal complement of $\Pi$. There exists $i$ such that the $i$-th coordinate of $f^{(d)}$ satisfies $|f^{(d)}_i| \geq \frac{1}{\sqrt d}$. We prove that the statement holds for any such choice of $i$. Without loss of generality, we may assume that $i=d$ and $f^{(d)}_d=\cos\theta>0$, where $\theta$ be the angle between $f^{(d)}$ and the $d$-th coordinate axis. 

The $(d-1)$-dimensional volume of the parallelepiped $P$ spanned by the vectors $x$, $y$, $f^{(3)},\ldots, f^{(d-1)}$ equals to $\|x\|\|y\|\sin\varphi$ and the $(d-1)$-dimensional volume of its projection $\pi_d(P)$ on $H_d$ equals to 
$\|x\|\|y\|\sin\varphi\cos\theta$. On the other hand, $\pi_d(P)$ is the parallelepiped spanned by the vectors $\pi_d(x)$, $\pi_d(y)$, $\pi_d(f^{(3)}),\ldots, \pi_d(f^{(d-1)})$. Since $\|\pi_d(f^{(j)})\|\leq \|f^{(j)}\|=1$ for all $j$, it follows (for example, by the Gram-Schmidt orthogonalization) that the $(d-1)$-dimensional volume of $\pi_d(P)$ is bounded from above by 
\[
\big(\|\pi_d(x)\|\|\pi_d(y)\|\sin\varphi_d\big)\,\|\pi_d(f^{(3)})\|\ldots\|\pi_d(f^{(d-1)})\| \leq \|\pi_d(x)\|\|\pi_d(y)\|\sin\varphi_d.
\]
Thus, for any linearly independent $x,y\in \Pi$, $\|\pi_d(x)\|\|\pi_d(y)\|\sin\varphi_d\geq \|x\|\|y\|\sin\varphi\cos\theta=\frac1{\sqrt d}\|x\|\|y\|\sin\varphi$. Since $\|\pi_d(x)\|\leq \|x\|$ and $\|\pi_d(y)\|\leq \|y\|$, we conclude that $\sin\varphi_d\geq \frac{1}{\sqrt d}\sin\varphi$. Further, by choosing $y$ orthogonal to $x$ and with the same norm, it follows that for any $x\in\Pi$, $\|\pi_d(x)\|\geq \frac1{\sqrt d}\|x\|$. 
The proof is completed. 
\end{proof}

\subsection{Proof of the upper bound of \eqref{intro:visibility-bounds}}\label{sec:PC-upper}

Fix $\alpha>0$ and $\rho>0$. 
Here, we prove that Poisson cylinders model $\mathcal L^\alpha_\rho$ satisfies the upper bound of \eqref{intro:visibility-bounds} with $\delta(r)$ as in part (2) of Theorem~\ref{intro:visibility-mainresult}. 
By Proposition~\ref{prop:visibility-general}, it suffices to show that condition \eqref{eq:visibility-general-upper} holds with $\delta=\frac12\rho\,\delta(r)$ and $D=D(d,\alpha,\rho)$. 
Note that $\delta<\rho$ for all $r\geq 1$.

If $\partial B(r)\cap B(re_1,\delta)$ is visible from $0$ through the vacant set of $\mathcal L^\alpha_\rho$, then the line segment $\ell_{re_1}$ is visible through the vacant set of $\mathcal L^\alpha_{\rho-\delta}$. Thus, it suffices to prove that there exists $D=D(d,\alpha,\rho)$ such that for all $r\geq 1$, 
\[
\mathsf P^\alpha_{\rho-\delta}[0\stackrel{L}\longleftrightarrow re_1]\leq D\mathsf P^\alpha_\rho[0\stackrel{L}\longleftrightarrow re_1].
\]
By \eqref{eq:PC-mu-rho}, this is equivalent to 
\[
\mu(\ell_{re_1}(\rho))- \mu(\ell_{re_1}(\rho-\delta)) \leq C,
\]
for some $C=C(d,\rho)$. 
By definition,
\[
\mu(\ell_{re_1}(\rho))- \mu(\ell_{re_1}(\rho-\delta))
= \int\limits_{SO_d}\big(\lambda_{d-1}\big(\pi(\ell_{\phi(re_1)}(\rho))\big)- \lambda_{d-1}\big(\pi(\ell_{\phi(re_1)}(\rho-\delta))\big)\big)\,\nu(d\phi).
\]
Note that for any rotation $\phi\in SO_d$, 
the projection $\pi(\ell_x(t))$ of cylinder $\ell_x(t)$ is a $(d-1)$-dimensional cylinder of radius $t$ in $H$ around the line segment $[0,\pi(x)]$. By Lemma~\ref{l:mu-segment-PC}, its $\lambda_{d-1}$-measure is equal to $|\pi(x)|+2t$ for $d=2$ and 
\[
\kappa_{d-1}t^{d-1} + \kappa_{d-2}t^{d-2}\|\pi(x)\| 
= 
\kappa_{d-1}t^{d-1} + \kappa_{d-2}t^{d-2}r\sin\varphi,
\]
for $d\geq 3$, where $\varphi$ is the angle between $x$ and $e_1$. As a result, 
for $d=2$, $\mu(\ell_{re_1}(\rho))- \mu(\ell_{re_1}(\rho-\delta))=2\delta$, and for $d\geq 3$, $\mu(\ell_{re_1}(\rho))- \mu(\ell_{re_1}(\rho-\delta))\leq C\delta r$, for some $C=C(d,\rho)$. Since $\delta(r)=1$ for $d=2$ and $\delta(r) = \frac1r$ for $d\geq 3$, both right hand sides are bounded uniformly in $r$, and the proof is completed. 
\qed

\subsection{Proof of the lower bound of \eqref{intro:visibility-bounds}}\label{sec:PC-lower}

Fix $\alpha>0$ and $\rho>0$. 
Here, we prove that Poisson cylinders model $\mathcal L^\alpha_\rho$ satisfies the lower bound of \eqref{intro:visibility-bounds} with $\delta(r)$ as in part (2) of Theorem~\ref{intro:visibility-mainresult}. 
By Proposition~\ref{prop:visibility-general} and Remark~\ref{rem:visibility-general}, it suffices to show that for some $\delta=D_1(d,\alpha,\rho)\delta(r)$, $s=r$ and $D=D_2(d,\alpha,\rho)$, condition \eqref{eq:visibility-general-lower} holds in dimensions $d\geq 3$ and condition \eqref{eq:visibility-general-lower-2} holds in dimension $d=2$.  
By \eqref{eq:PC-mu-rho}, for $x\in\partial B(r)\cap B(re_1,r)$, 
\[
\mathsf P^\alpha_\rho[0\stackrel{L}\longleftrightarrow x\,|\,0\stackrel{L}\longleftrightarrow re_1]= 
\exp\Big(-\alpha\big(\mu(\ell_{re_1}(\rho)\cup\ell_x(\rho)) - \mu(\ell_{re_1}(\rho))\big)\Big).
\]
Thus, it suffices to prove that there exist $c=c(d,\rho)$ and $C=C(d,\rho)$ such that for all $r\geq 1$, 
\begin{equation}\label{eq:PC-lower-main}
\mu(\ell_{re_1}(\rho)\cup\ell_x(\rho)) - \mu(\ell_{re_1}(\rho))\geq 
\left\{\begin{array}{lll} 
c\tfrac{1}{\delta(r)}\min(\|x-re_1\|,1) - C, && d\geq 3,\\[4pt]
c\tfrac{1}{\delta(r)}\|x-re_1\| - C, && d=2,
\end{array}\right.
\end{equation}
where $\|x-re_1\|\leq r$. 

Using the definition of $\mu$ and the facts that for any rotation $\phi\in SO_d$ and $z\in \R^d$, (a) the rotation of cylinder $\ell_z(\rho)$ by $\phi$ is the cylinder $\ell_{\phi(z)}(\rho)$ and (b) the projection of cylinder $\ell_z(\rho)$ on $H$ is the $(d-1)$-dimensional cylinder of radius $\rho$ around the line segment $[0,\pi(z)]$, which equals to $\ell_{\pi(z)}(\rho)\cap H$, we can rewrite the left hand side of \eqref{eq:PC-lower-main} as 
\begin{multline*}
\int\limits_{SO_d}\Big(\lambda_{d-1}\big((\ell_{\pi(\phi(re_1))}(\rho)\cap H)\cup(\ell_{\pi(\phi(x))}(\rho)\cap H)\big)
- \lambda_{d-1}\big(\ell_{\pi(\phi(re_1))}(\rho)\cap H\big)
\Big)\nu(d\phi)\\
= 
\int\limits_{SO_d} \lambda_{d-1}\big((\ell_{\pi(\phi(x))}(\rho)\setminus\ell_{\pi(\phi(re_1))}(\rho)) \cap H\big)
\nu(d\phi).
\end{multline*}
Let $H_i$ be the hyperplane $\{z=(z_1,\ldots, z_d)\in\R^d\,:\,z_i=0\}$ and $\pi_i$ the orthogonal projection on $H_i$. 
By the invariance of the Haar measure, we have 
\begin{multline*}
\int\limits_{SO_d} \lambda_{d-1}\big((\ell_{\pi(\phi(x))}(\rho)\setminus\ell_{\pi(\phi(re_1))}(\rho)) \cap H\big)
\nu(d\phi)\\
= 
\frac12\int\limits_{SO_d} \lambda_{d-1}\big((\ell_{\pi(\phi(re_1))}(\rho)\Delta\ell_{\pi(\phi(x))}(\rho))\cap H\big)
\nu(d\phi)\\
=
\frac1{2d}\int\limits_{SO_d} \sum\limits_{i=1}^d\lambda_{d-1}\big((\ell_{\pi_i(\phi(re_1))}(\rho)\Delta\ell_{\pi_i(\phi(x))}(\rho))\cap H_i\big)
\nu(d\phi).
\end{multline*}

If $d=2$, then by Lemma~\ref{l:volume-symmetric-difference}, 
\begin{eqnarray*}
\sum\limits_{i=1}^2\lambda_1\big((\ell_{\pi_i(\phi(re_1))}(\rho)\Delta\ell_{\pi_i(\phi(x))}(\rho))\cap H_i\big)
&= &\sum\limits_{i=1}^2 \big|\pi_i(\phi(x))-\pi_i(\phi(re_1))\big|\\
&\geq &\|\phi(x) -\phi(re_1)\|= \|re_1-x\|.
\end{eqnarray*}
Thus, $\mu(\ell_{re_1}(\rho)\cup\ell_x(\rho)) - \mu(\ell_{re_1}(\rho))\geq\frac14\|re_1-x\|$. Since $\delta(r) =1$, \eqref{eq:PC-lower-main} holds. 

\smallskip

Let $d\geq 3$. We will prove that for some $c=c(d,\rho)$ and all $r>8\rho\sqrt d$, $\mu(\ell_{re_1}(\rho)\cup\ell_x(\rho)) - \mu(\ell_{re_1}(\rho))\geq c\tfrac{1}{\delta(r)}\min(\|x-re_1\|,1)$, which implies \eqref{eq:PC-lower-main}.  
Let 
\[
r_i = \min\big(\|\pi_i(\phi(re_1))\|, \|\pi_i(\phi(x))\|\big)
\]
and let $\varphi_i$ be the angle between the vectors $\pi_i(\phi(re_1))$ and $\pi_i(\phi(x))$. By Lemma~\ref{l:volume-symmetric-difference}, if $r_i>8\rho$ then 
\[
\lambda_{d-1}\big((\ell_{\pi_i(\phi(re_1))}(\rho)\Delta\ell_{\pi_i(\phi(x))}(\rho))\cap H_i\big)
\geq c r_i\min\big(r_i\sin\frac{\varphi_i}2,1\big), 
\]
for some $c=c(d,\rho)>0$. By Lemma~\ref{l:vector-projections}, there exists $i_0$ such that 
$\|\pi_{i_0}(\phi(re_1))\|\geq \frac1{\sqrt d}r$, 
$\|\pi_{i_0}(\phi(x))\|\geq \frac1{\sqrt d}r$ 
and $\sin\varphi_{i_0}\geq \frac1{\sqrt d}\sin\varphi$, where we have also used that $\phi$ preserves the norms and angles between vectors. In particular, these imply that $r_{i_0}\geq \frac1{\sqrt d}r>8\rho$ and $\sin\frac{\varphi_{i_0}}2\geq \frac12\sin\varphi_{i_0}\geq \frac{1}{2\sqrt d}\sin\varphi\geq \frac{1}{2\sqrt d}\sin\frac\varphi2$, where in the last inequality we used that $\varphi<\frac\pi2$ (which follows from the assumption $\|x-re_1\|\leq r$). Therefore, 
\[
\lambda_{d-1}\big((\ell_{\pi_{i_0}(\phi(re_1))}(\rho)\Delta\ell_{\pi_{i_0}(\phi(x))}(\rho))\cap H_{i_0}\big)
\geq \frac{c}{2d\sqrt d}\, r\min\big(r\sin\frac\varphi2,1\big).
\]
Since $r\sin\frac\varphi2 = \frac12\|x-re_1\|$, we conclude that for any rotation $\phi\in SO_d$, 
\[
\sum\limits_{i=1}^d\lambda_{d-1}\big((\ell_{\pi_i(\phi(re_1))}(\rho)\Delta\ell_{\pi_i(\phi(x))}(\rho))\cap H_i\big)
\geq \frac{c}{4d\sqrt d}\, r\min\big(\|x-re_1\|,1\big).
\]
Thus, for all $r>8\rho\sqrt d$, $\mu(\ell_{re_1}(\rho)\cup\ell_x(\rho)) - \mu(\ell_{re_1}(\rho))\geq \frac{c}{8d^2\sqrt d}\, r\min\big(\|x-re_1\|,1\big)$. The proof is completed. \qed

\section{Boolean model}\label{sec:BM}

Let $\mathsf Q$ be a probability distribution on $\R_+$ and denote by $\rho$ a generic random variable with distribution $\mathsf Q$. Let $\omega = \sum_{i\geq 1}\delta_{(x_i,r_i)}$ be a Poisson point process on $\R^d\times\R_+$ ($d\geq 2$) with intensity measure $\alpha dx\otimes \mathsf Q$. Its support induces the closed random subset of $\R^d$
\[
\mathcal B^\alpha_\rho = \mathcal B^\alpha_\rho(\omega) =\bigcup\limits_{i\geq 1}B(x_i,r_i),
\]
which is called the \emph{Boolean model with intensity $\alpha$ and radii distribution $\mathsf Q$}. The set $\mathcal B^\alpha_\rho$ does not coincide with $\R^d$ if and only if 
\begin{equation}\label{eq:BM-expected-volume}
\mathsf E[\rho^d]<\infty, 
\end{equation}
see e.g.\ \cite[Proposition~3.1]{MR-Book}, which we may assume from now on, since otherwise the bounds \eqref{intro:visibility-bounds} are trivial. 

The number of balls that intersect a compact set $K$ is a Poisson random variable with parameter 
\[
\int\limits_{(x,r)\,:\,B(x,r)\cap K\neq\emptyset}\alpha dx\otimes \mathsf Q(dr) = \alpha\int\limits_{\R^d}\mathsf P\big[\rho\geq d(x,K)\big]\,dx.
\]
Thus, the law of $\mathcal B^\alpha_\rho$, denoted by $\mathsf P^\alpha_\rho$, is characterized by the relations
\begin{equation}\label{eq:BM-mu}
\mathsf P \big[\mathcal B^\alpha_\rho\cap K = \emptyset\big] = e^{-\alpha\mu_\rho(K)},\quad\text{for compact }K\subset\R^d,
\end{equation}
where 
\[
\mu_\rho(K) = \int\limits_{\R^d}\mathsf P\big[\rho\geq d(x,K)\big]\,dx.
\]

\smallskip

We first compute the value of $\mu_\rho$ for finite cylinders $\ell_{re_1}(\rho)$ in Section~\ref{sec:BM-mu} and then prove part (3) of Theorem~\ref{intro:visibility-mainresult} in Sections~\ref{sec:BM-upper} (upper bound of \eqref{intro:visibility-bounds}) and \ref{sec:BM-lower} (lower bound of \eqref{intro:visibility-bounds}).

\subsection{Auxiliary result}\label{sec:BM-mu}
In this section we compute explicitly $\mu_\rho(\ell_{re_1})$, which gives an explicit expression for $f(r)$ (see \eqref{def:fPvis}). 

\begin{lemma}\label{l:BM-fr}
Let $\alpha>0$ and $\mathsf Q$ a probability law on $\R_+$ satisfying \eqref{eq:BM-expected-volume}. Let $r>0$. Then 
\[
\mu_\rho(\ell_{re_1}) = \kappa_d\mathsf E[\rho^d] + \kappa_{d-1}\mathsf E[\rho^{d-1}]r,
\]
where $\kappa_s$ is the volume of the $s$-dimensional Euclidean unit ball. 
\end{lemma}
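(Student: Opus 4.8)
The plan is to compute the integral $\mu_\rho(\ell_{re_1}) = \int_{\R^d}\mathsf P[\rho\geq d(x,\ell_{re_1})]\,dx$ directly by splitting $\R^d$ into regions according to which part of the segment $\ell_{re_1}=[0,re_1]$ realizes the distance, and then exchanging the order of integration (Tonelli, everything being nonnegative). First I would write $\mathsf P[\rho\geq d(x,\ell_{re_1})] = \int_{\R_+}\mathds{1}_{\{t\geq d(x,\ell_{re_1})\}}\,\mathsf Q(dt)$ and swap to get
\[
\mu_\rho(\ell_{re_1}) = \int_{\R_+}\lambda_d\big(B(\ell_{re_1},t)\big)\,\mathsf Q(dt) = \int_{\R_+}\lambda_d\big(\ell_{re_1}(t)\big)\,\mathsf Q(dt).
\]
This is the key reduction: $\mu_\rho(\ell_{re_1})$ is just the $\mathsf Q$-average of the volume of the $t$-cylinder around $\ell_{re_1}$.

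Next I would invoke Lemma~\ref{l:mu-segment-PC}, which gives $\lambda_d(\ell_{re_1}(t)) = \kappa_{d-1}t^{d-1}r + \kappa_d t^d$ for $d\geq 2$ (this is exactly the statement there with $\rho$ replaced by $t$ and $\|x\|=r$). Plugging this in,
\[
\mu_\rho(\ell_{re_1}) = \int_{\R_+}\big(\kappa_{d-1}t^{d-1}r + \kappa_d t^d\big)\,\mathsf Q(dt) = \kappa_{d-1}r\int_{\R_+}t^{d-1}\,\mathsf Q(dt) + \kappa_d\int_{\R_+}t^d\,\mathsf Q(dt),
\]
and recognizing $\int_{\R_+}t^k\,\mathsf Q(dt) = \mathsf E[\rho^k]$ gives $\mu_\rho(\ell_{re_1}) = \kappa_{d-1}\mathsf E[\rho^{d-1}]r + \kappa_d\mathsf E[\rho^d]$, which is the claim. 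Finiteness of both moments is covered by the hypothesis \eqref{eq:BM-expected-volume}, since $\mathsf E[\rho^{d-1}]\leq 1+\mathsf E[\rho^d]<\infty$.

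There is essentially no hard part here — the only things to be careful about are: (i) justifying the interchange of integrals, which is immediate from Tonelli since the integrand is nonnegative and jointly measurable; (ii) noting that the distance function $d(\cdot,\ell_{re_1})$ is continuous, so $\{x : t\geq d(x,\ell_{re_1})\}$ is exactly the closed $t$-neighborhood $\ell_{re_1}(t)$ and has the volume computed in Lemma~\ref{l:mu-segment-PC} (the boundary has measure zero, so it does not matter whether the inequality is strict); and (iii) the $d=1$ case is excluded since the Boolean model is treated for $d\geq 2$, but if one wanted it, the same argument with the first bullet of Lemma~\ref{l:mu-segment-PC} would give $\mu_\rho(\ell_{re_1}) = r + 2\mathsf E[\rho]$. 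I would present the proof in three lines: Tonelli swap, Lemma~\ref{l:mu-segment-PC}, recognize the moments.
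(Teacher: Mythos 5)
Your proof is correct and takes essentially the same route as the paper's: both apply Fubini/Tonelli to rewrite $\mu_\rho(\ell_{re_1})$ as the $\mathsf Q$-average of $\lambda_d(\ell_{re_1}(t))$ and then invoke Lemma~\ref{l:mu-segment-PC}. The paper merely writes the swap as $\mathsf E[\lambda_d(\ell_{re_1}(\rho))]$ rather than spelling out the integral against $\mathsf Q$.
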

\begin{proof}
By the definition of $\mu_\rho$ and Fubini,  
\[
\mu_\rho(\ell_{re_1}) = \int\limits_{\R^d} \mathsf P\big[\rho\geq d(x,\ell_{re_1})\big]\,dx
= \mathsf E\big[\lambda_d\big(\ell_{re_1}(\rho)\big)\big]
\stackrel{(\mathrm{L. }\ref{l:mu-segment-PC})}= \mathsf E\big[\kappa_d\rho^d + \kappa_{d-1}\rho^{d-1}r\big],
\]
where $\lambda_d$ is the Lebesgue measure in $\R^d$, which implies the result.
\end{proof}

\subsection{Proof of the upper bound of \eqref{intro:visibility-bounds}}\label{sec:BM-upper}

Fix $\alpha>0$ and $\mathsf Q$. 
Here, we prove that Boolean model $\mathcal B^\alpha_\rho$ satisfies the upper bound of \eqref{intro:visibility-bounds} with $\delta(r)$ as in part (3) of Theorem~\ref{intro:visibility-mainresult}. 
By Proposition~\ref{prop:visibility-general}, it suffices to show that condition \eqref{eq:visibility-general-upper} holds with $\delta=\delta(r)=\frac1r$ and $D=D(d,\alpha,\mathsf Q)$. 

If $\partial B(r)\cap B(re_1,\delta)$ is visible from $0$ through the vacant set of $\mathcal B^\alpha_\rho$, then the line segment $\ell_{re_1}$ is visible through the vacant set of $\mathcal B^\alpha_{(\rho-\delta)^+}$. Thus, it suffices to prove that there exists $D=D(d,\alpha,\mathsf Q)$ such that for all $r\geq 1$, 
\[
\mathsf P^\alpha_{(\rho-\frac1r)^+}[0\stackrel{L}\longleftrightarrow re_1]\leq D\mathsf P^\alpha_\rho[0\stackrel{L}\longleftrightarrow re_1].
\]
By \eqref{eq:BM-mu}, this is equivalent to 
\[
\mu_\rho(\ell_{re_1})- \mu_{(\rho-\frac1r)^+}(\ell_{re_1}) \leq C,
\]
for some $C=C(d,\mathsf Q)$. The latter is immediate from Lemma~\ref{l:BM-fr}.
\qed

\subsection{Proof of the lower bound of \eqref{intro:visibility-bounds}}\label{sec:BM-lower}

Fix $\alpha>0$ and $\mathsf Q$. 
Here, we prove that Boolean model $\mathcal B^\alpha_\rho$ satisfies the lower bound of \eqref{intro:visibility-bounds} with $\delta(r)$ as in part (3) of Theorem~\ref{intro:visibility-mainresult}. 
By Proposition~\ref{prop:visibility-general}, it suffices to show that condition \eqref{eq:visibility-general-lower} holds with $\delta=D_1(d,\alpha,\mathsf Q)\delta(r)$, $s=2r$ and $D=D_2(d,\alpha,\mathsf Q)$. 
By \eqref{eq:BM-mu}, for $x\in\partial B(r)$, 
\[
\mathsf P^\alpha_\rho[0\stackrel{L}\longleftrightarrow x\,|\,0\stackrel{L}\longleftrightarrow re_1]= 
\exp\Big(-\alpha\big(\mu_\rho(\ell_x\cup\ell_{re_1}) - \mu_\rho(\ell_{re_1})\big)\Big).
\]
Thus, it suffices to prove that there exist $c=c(d,\mathsf Q)$ and $C=C(d,\mathsf Q)$ such that for all $r\geq 1$, 
\[
\mu_\rho(\ell_x\cup\ell_{re_1}) - \mu_\rho(\ell_{re_1})\geq cr\min(\|x-re_1\|,1) - C.
\]
Let $0<r_1<r_2$ be such that $\mathsf P[r_1\leq\rho\leq r_2]>0$. It suffices to prove the inequality for all $r>8r_2$. 
By the definition of $\mu_\rho$ and Fubini, the left hand side is equal to 
\begin{eqnarray*}
\mathsf E\big[\lambda_d\big(\ell_x(\rho)\cup\ell_{re_1}(\rho)\big)\big] 
- \mathsf E\big[\lambda_d\big(\ell_{re_1}(\rho)\big)\big] 
&= &\mathsf E\big[\lambda_d\big(\ell_x(\rho)\setminus\ell_{re_1}(\rho)\big)\big] \\
&\geq &\mathsf E\big[\lambda_d\big(\ell_x(\rho)\setminus\ell_{re_1}(\rho)\big);\,r_1\leq \rho\leq r_2\big] \\
&\stackrel{\eqref{eq:volume-equal-norms}} \geq &\mathsf E\big[cr\rho^{d-2}\min(\|x-re_1\|,\rho);\,r_1\leq \rho\leq r_2\big]\\ 
&\geq &c r\min(\|x-re_1\|,1),
\end{eqnarray*}
for some $c=c(d,\mathsf Q)$. The proof is completed. \qed

\section*{Acknowledgements}
The authors would like to thank the anonymous referees for the careful reading of the paper and useful remarks. The research of both authors has been supported by the DFG Priority Program 2265 ``Random Geometric Systems'' (Project number 443849139).


\begin{thebibliography}{4}

\bibitem{ATT18}
D. Ahlberg, V. Tassion and A. Teixeira (2018) Existence of an unbounded vacant set for subcritical continuum percolation. \emph{Electron. Commun. Probab.} \textbf{23}. 

\bibitem{BJST-visibility-H}
I. Benjamini, J. Jonasson, O. Schramm and J. Tykesson (2009) Visibility to infinity in the hyperbolic plane, despite obstacles. \emph{ALEA Lat. Am. J. Probab. Math. Stat.} \textbf{6}, 323--342.

\bibitem{BT-PC-connected}
E. Broman and J. Tykesson (2016) Connectedness of Poisson cylinders in Euclidean
space. \emph{Annales de l’Institut Henri Poincaré, Probabilités et Statistiques} \textbf{52(1)}, 102--126.

\bibitem{Calka-visibility}
P. Calka, J. Michel and S. Porret-Blanc (2009) Visibilit\'e dans le mod\'ele bool\'een. \emph{C. R. Math. Acad. Sci. Paris} \textbf{347 (11-12)}, 659--662.

\bibitem{DRT-Boolean}
H. Duminil-Copin, A. Raoufi and V. Tassion (2020) Subcritical phase of $d$-dimensional Poisson-Boolean percolation and its vacant set. \emph{Annales Henri Lebesgue} \textbf{3}, 677--700.

\bibitem{ET-visibility}
O. Elias and J. Tykesson (2019)
Visibility in the vacant set of the Brownian interlacements and the Brownian excursion process. \emph{ALEA, Lat. Am. J. Probab. Math. Stat.} \textbf{16}, 1007--1028.

\bibitem{Gouere08}
J.-B. Gou\'er\'e (2008) Subcritical regimes in the Poisson Boolean model of continuum percolation. \emph{Ann. Probab.} \textbf{36}, 1209--1220. 

\bibitem{HST-PC-d3}
M.R. Hil\'ario, V. Sidoravicius and A. Teixeira (2015) Cylinders' percolation in three dimensions. \emph{Probab. Theory Related Fields} \textbf{163(3-4)}, 613--642.

\bibitem{Kahane-1}
J.-P. Kahane (1985) \emph{Some Random Series of Functions} (Camb. Stud. Adv. Math. 5), 2nd edn. Cambridge University Press. 

\bibitem{Kahane-2}
J.-P. Kahane (1990) Recouvrements aléatoires et théorie du potentiel. \emph{Colloq. Math.} \textbf{60/61}, 387--411.

\bibitem{Kahane-3}
J.-P. Kahane (1991) Produits de poids aléatoires indépendants et applications. In \emph{Fractal Geometry and Analysis} (Montreal, PQ, 1989; NATO Adv. Sci. Inst. Ser. C Math. Phys. Sci. \textbf{346}), Kluwer, Dordrecht, 277--324. 

\bibitem{Li-BI}
X. Li (2020) Percolative properties of Brownian interlacements and its vacant set. \emph{J. Theoret. Probab.} \textbf{33(4)}, 1855--1893.

\bibitem{Matheron}
G. Matheron (1975) \emph{Random sets and integral geometry}. John Wiley \& Sons, New York-London-Sydney.

\bibitem{MR-Book}
R. Meester and R. Roy (1996) \emph{Continuum percolation}. Cambridge University Press, Cambridge. 

\bibitem{MP-BM-book}
P. M\"orters and Y. Peres (2010) \emph{Brownian motion}. Cambridge University Press, Cambridge. 

\bibitem{MS-BI}
Y. Mu and A. Sapozhnikov (2024) On questions of uniqueness for the vacant set of Wiener sausages and Brownian interlacements. \emph{Probab. Theory Related Fields} \textbf{190}, 703--751.

\bibitem{Pen18}
M. Penrose (2018) Non-triviality of the vacancy phase transition for the Boolean model. \emph{Electron. Commun. Probab.} \textbf{23}.

\bibitem{Polya-visibility}
G. P\'olya (1918) Zahlentheoretisches und wahrscheinlichkeitstheoretisches \"uber die
sichtweite im walde. \emph{Arch. Math. Phys} \textbf{27}, 135--142.

\bibitem{PortStone}
S.C. Port and C.J. Stone (1978) \emph{Brownian motion and classical potential theory.} Academic Press [Harcourt Brace Jovanovich, Publishers], New York-London.

\bibitem{Sznitman-AM}
A.-S. Sznitman (2010) Vacant set of random interlacements and percolation. \emph{Ann. Math.} \textbf{171}, 2039--2087.

\bibitem{Sznitman-BI}
A.-S. Sznitman (2013) On scaling limits and Brownian interlacements. \emph{Bull. Braz. Math. Soc., New Series}, \textbf{44(4)}, 555--592. Special Issue \emph{IMPA 60 years}.

\bibitem{TC-visibility-H}
J. Tykesson and P. Calka (2013) Asymptotics of visibility in the hyperbolic plane. \emph{Adv. in Appl. Probab.} \textbf{45(2)}, 332--350.

\bibitem{TW-cylinders}
J. Tykesson and D. Windisch (2012) Percolation in the vacant set of Poisson cylinders. 
\emph{Probab. Theory Relat. Fields} \textbf{154}, 165--191.

\bibitem{Zacks-visibility}
S. Zacks (2020). Visibility in Random Fields. In: \emph{The career of a research statistician. Statistics for Industry, Technology, and Engineering.} Birkhäuser, Cham, 97--111.

\end{thebibliography}
\end{document}